\documentclass[12pt]{article}
\usepackage{caption}
\usepackage{diagbox}
\usepackage{verbatim}
\usepackage[usenames]{color}
\usepackage{amssymb}
\usepackage{graphicx}
\usepackage{mathtools}
\usepackage{amscd}
\usepackage{algorithm}
\usepackage{enumerate}
\usepackage{cases}
\usepackage[noend]{algpseudocode}
\usepackage[shortlabels]{enumitem}
\usepackage{array, booktabs, multirow}
\usepackage{enumitem}
\usepackage{multicol}
\usepackage{moresize}

\usepackage{tikz}
\usetikzlibrary{decorations.text}

\usepackage{listings}

\definecolor{verbgray}{gray}{0.9}

\lstnewenvironment{code}{%
  \lstset{backgroundcolor=\color{verbgray},
 frame=single,
  language=C,
  framerule=0pt,
  basicstyle=\ttfamily,
  showstringspaces=false,
  commentstyle=\color{blue}\textit,
  keywordstyle=\color{black}\bf,
  numbers=none,
  keepspaces=true,
  columns=fullflexible}}{}

\usepackage[colorlinks=true,
linkcolor=webgreen,
filecolor=webbrown,
citecolor=webgreen]{hyperref}

\definecolor{webgreen}{rgb}{0,.5,0}
\definecolor{webbrown}{rgb}{.6,0,0}

\usepackage{xcolor}
\usepackage{color}
\usepackage{fullpage}
\usepackage{float}

\usepackage{graphics,amsmath,amssymb}
\usepackage{amsthm}
\usepackage{amsfonts}
\usepackage{latexsym}
\usepackage{epsf}

\definecolor{shadecolor}{rgb}{.91, .91, .91}
\definecolor{darkshade}{rgb}{0.7, 0.7, 0.7}

\definecolor{bordercolor}{rgb}{.99, .5, 0}
\definecolor{bordercolor2}{rgb}{0, 0.125, 0.376}
\definecolor{bordercolor3}{rgb}{0.07, 0.53, 0.03}

\definecolor{ultramarine}{rgb}{0, 0.125, 0.576}

 \definecolor{arsenic}{rgb}{0.23, 0.27, 0.29}
 \definecolor{beige}{rgb}{0.96, 0.96, 0.86}

\definecolor{amber}{rgb}{1.0, 0.75, 0.0}
\definecolor{orange}{rgb}{1.0, 0.49, 0.0}
\definecolor{dandelion}{rgb}{0.94, 0.88, 0.19}

  \definecolor{indiagreen}{rgb}{0.07, 0.53, 0.03}
  \definecolor{huntergreen}{rgb}{0.21, 0.37, 0.23}

\definecolor{brightcyan}{RGB}{0, 200, 255}
\newcommand{\blue}[1]{\textcolor{brightcyan}{#1}}
\newcommand{\red}[1] {\textcolor{red}{#1}}

\newcommand{\defo}[1] {{\bf \textcolor{ultramarine}{#1}}}

\newcommand{\seqnum}[1]{\href{https://oeis.org/#1}{\underline{#1}}}

\DeclareMathOperator{\suf}{suf}
\DeclareMathOperator{\runs}{runs}
\DeclareMathOperator{\Ns}{\textbf{N}}
\DeclareMathOperator{\Nn}{N}
\DeclareMathOperator{\Ls}{\textbf{L}}
\DeclareMathOperator{\Ln}{L}
\DeclareMathOperator{\Qs}{\textbf{Q}}
\DeclareMathOperator{\Qn}{Q}
\DeclareMathOperator{\Ps}{\textbf{P}}

\DeclareMathOperator{\PSuf}{psuf}
\DeclareMathOperator{\Next}{MS}
\DeclareMathOperator{\inc}{inc}
\DeclareMathOperator{\dec}{dec}
\DeclareMathOperator{\decF}{decFirst}

\newsavebox{\mybox}

\newcommand{\xeq}[1]{\overset{#1}{=}}

\usepackage{breakurl}

\makeatletter
\newcommand*{\inlineequation}[2][]{%
  \begingroup
    \refstepcounter{equation}%
    \ifx\\#1\\%
    \else
      \label{#1}%
    \fi
    \relpenalty=10000 %
    \binoppenalty=10000 %
    \ensuremath{%
      #2%
    }%
    ~\@eqnnum
  \endgroup
}
\makeatother
\usepackage{hyperref}

\begin{document}

\theoremstyle{plain}
\newtheorem{theorem}{Theorem}
\newtheorem{corollary}[theorem]{Corollary}
\newtheorem{lemma}[theorem]{Lemma}
\newtheorem{proposition}[theorem]{Proposition}
\newtheorem{remark}[theorem]{Remark}

\theoremstyle{definition}
\newtheorem{definition}[theorem]{Definition}
\newtheorem{example}[theorem]{Example}
\newtheorem{conjecture}[theorem]{Conjecture}
\MakeRobust{\Call}

\title{Necklaces and Lyndon words in colexicographic order}

\author{Daniel Gabri\'{c} and Joe Sawada \\
School of Computer Science \\
University of Guelph \\
50 Stone Road East \\
Guelph, ON  N1G 2W1 \\
Canada\\
\href{mailto:dgabric@uoguelph.ca}{\tt dgabric@uoguelph.ca} \\
\href{mailto:jsawada@uoguelph.ca}{\tt jsawada@uoguelph.ca}
}
\date{}

\maketitle

\begin{abstract}
We present the first constant-amortized-time algorithms for generating all length-$n$ necklaces and Lyndon words over a $k$-letter alphabet in colexicographic order, for arbitrary $k\geq 2$. Our approach introduces a novel class of words called \emph{quasinecklaces},  which serve as an easily generated superset of necklaces through which all necklaces can be efficiently identified. We derive a formula for the number $\Qn_k(n)$ of length-$n$ quasinecklaces and show that $\Qn_k(n)$ is proportional to the number of length-$n$ necklaces, which is the key property needed to achieve constant amortized time. We also apply our results to efficiently generate a well-known de Bruijn sequence and efficiently generate necklaces and Lyndon words subject to a weight constraint.
\end{abstract}

\begin{keywords}
{Necklace, Lyndon word, colex order, Grandmama de Bruijn sequence, quasinecklace}
\end{keywords}

\section{Introduction}

Let $\Sigma_k$ denote the $k$-letter totally ordered alphabet $\{1,2,\ldots, k\}$ where $1<2<\cdots < k$. Throughout this paper, we implicitly assume all words have symbols from $\Sigma_k$. We call a word $w$ a \emph{necklace}\footnote{Necklaces are also commonly referred to as equivalence classes of words under rotation.} if it is, not necessarily uniquely, lexicographically smallest among all of its nontrivial rotations. The word $w$ is a \emph{Lyndon word} if it is strictly smaller than all of its nontrivial rotations. For example, the word $113213$ is both a necklace and a Lyndon word, $121212$ is a necklace but not a Lyndon word, and the word $12312$ is neither a Lyndon word nor a necklace. Necklaces and Lyndon words are classical objects in combinatorics on words and have many practical applications. For example, they appear in string algorithms~\cite{Bannai&etal:2017,Duval:1983}, coding theory~\cite{Golomb:1967,Golomb&Gordon&Welch:1958}, bionformatics~\cite{Bonomo&etal:2014,Martayan&etal:2024,Olbrich:2025}, and even music theory~\cite{Chemillier:2004}.

The study of necklaces and Lyndon words naturally raises the question of how to generate them efficiently. We call a generation algorithm constant amortized time (CAT) if the total work performed divided by the number of objects generated is bounded by a constant. Several CAT algorithms for generating necklaces and Lyndon words in lexicographic order are known. Fredricksen, Kessler, and Maiorana~\cite{Fredricksen&Kessler:1986,Fredricksen&Maiorana:1978} presented the FKM algorithm, which generates necklaces in lexicographic order and was subsequently shown to be CAT by Ruskey et al.~\cite{Ruskey&etal:1992}. Duval~\cite{Duval:1988} independently discovered a different lexicographic algorithm for necklaces, later proved to run in CAT by Berstel and Pocchiola~\cite{Berstel&Pocchiola:1994}. Notably, the lexicographically least rotation of a de Bruijn sequence can be obtained by concatenating the Lyndon words generated by Duval's algorithm.

Although necklaces and Lyndon words can already be efficiently generated in lexicographic order, some applications rely on different orderings. For example, the Grandmama de Bruijn sequence~\cite{Dragon&etal:2018} is constructed by concatenating the primitive roots of all necklaces arranged in colexicographic (colex) order, and colex orderings of necklaces also arise in the construction of universal cycles for subsets~\cite{Campbell&etal:2026}. Sawada et al.~\cite{Sawada&Williams:2013,Sawada&Williams&Wong:2017} introduced a CAT algorithm to generate binary necklaces in colex order. Their method relies on an auxiliary set of words called pseudonecklaces, which are easy to generate in colex order and whose cardinality is proportional to that of binary necklaces. For alphabets of size $k\geq 3$, however, no such CAT generation algorithm has been available.


\textbf{Main results.} In this paper, we resolve this gap by presenting the first CAT algorithms for generating $k$-ary necklaces and Lyndon words in colex order for arbitrary $k\geq 2$. To do so, we introduce a novel class of words called quasinecklaces. Quasinecklaces are similar in spirit to pseudonecklaces: every pseudonecklace is a quasinecklace, but the converse does not hold in general. We show that the number $\Qn_k(n)$ of length-$n$ quasinecklaces over $\Sigma_k$ is $\mathcal{O}(\Nn_k(n))$, where $\Nn_k(n)$ is the number of length-$n$ necklaces over $\Sigma_k$, which is the key property needed to achieve constant amortized time. This bound also holds if we add an upper bound on the weight of the strings. We apply these results to efficiently generate the Grandmama de Bruijn sequence and bounded-weight universal cycles.  We provide a complete C implementation in Appendix~\ref{appendix:neck}.

\textbf{Outline.} The remainder of the paper is organized as follows. Section~\ref{section:preliminaries} establishes preliminary definitions and notation. Section~\ref{section:quasi} introduces quasinecklaces, presents a CAT algorithm for generating them in colex order, and derives a formula for their count $\Qn_k(n)$. Section~\ref{section:neck} describes how to modify the quasinecklace algorithm to generate necklaces or Lyndon words in colex order, analyzes its running time, and extends the method to handle a weight constraint. In Section~\ref{section:app}, we apply our results to efficiently a (bounded-weight) de Bruijn sequence. We conclude with a summary and open questions in Section~\ref{section:open}. In this paper, we assume the unit-cost RAM model of computation.

\section{Preliminaries}\label{section:preliminaries}

Recall that $\Sigma_k$ denotes the $k$-letter totally ordered alphabet $\{1,2,\ldots, k\}$ where $1<2<\cdots < k$. Throughout this paper, we implicitly assume all words have symbols from $\Sigma_k$ and all words are $1$-indexed. Let $w$ be a word of length $n$. Let $i$ and $j$ be integers such that $1\leq i,j \leq n$. Let $\min(u)$ denote the smallest symbol of $u$. We denote the $i$'th symbol of $w$ as $w[i]$. We say that $y$ is a \emph{factor} of $w$ if there exist (possibly empty) words $x$ and $z$ such that $w=xyz$. We will use the notation $w[i:j]$ to denote the factor of $w$ starting at index $i$ and ending at index $j$ where $w[i:j]=\epsilon$ if $i>j$. Any factor of the form $w[1:i]$ (resp. $w[i:n]$) is called a \emph{prefix} (resp. \emph{suffix}). If $i<n$ (resp. $i>1$), then we call the prefix (resp. suffix) \emph{proper}. For $p>0$, let $w^p = ww^{p-1}$ where $w^0 = \epsilon$.  The word $w$ is said to be a \emph{power} if $w=u^p$ for some word $u$ and some integer $p>1$; otherwise, we say that $w$ is \emph{primitive}. The \emph{primitive root} of $w$ is the shortest prefix $u$ of $w$ such that $w=u^p$ for some $p\geq 1$.

Let $u$ and $v$ be words of lengths $m$ and $n$ respectively. Then $u$ comes before $v$ in \emph{lexicographic} order if either $u$ is a prefix of $v$ or if there exists an integer $i$ such that $u[1:i-1] = v[1:i-1]$ and $u[i]< v[i]$. Additionally, $u$ comes before $v$ in \emph{colexicographic} (colex) order if either $u$ is a suffix of $v$ or if there exists an integer $i$ such that $u[m-i+1:m] = v[n-i+1:n]$ and $u[m-i] < v[n-i]$. If $u$ is lexicographically less than (or equal to) $v$, we write $u<v$ (resp. $u \leq v$). 

Let $u$ and $v$ be words of length $n$. We say that $u$ is a \emph{rotation} of $v$ if there exist words $x$, $y$ such that $u=xy$ and $v=yx$. We formalize this notion by introducing the left-shift map $\sigma$ so that $\sigma^i(u) = yx$ where $u=xy$ and $|x|=i$, where $i$ is an integer such that $0\leq i \leq |u|$. We say that $u$ is a \emph{nontrivial} rotation of $v$ if $\sigma^i(u)=v$ where $i$ is an integer such that $0 < i < |u|$.

We can now formally define necklaces and Lyndon words. The word $w$ is said to be a \emph{necklace} if $w \leq \sigma^i(w)$ for all integers $i$ such that $0 < i < |w|$. The word $w$ is said to be a \emph{Lyndon word} if $w < \sigma^i(w)$ for all integers $i$ such that $0< i < |w|$. Equivalently, the word $w$ is a Lyndon word if and only if it is lexicographically smaller than all of its proper suffixes. Note that every Lyndon word is primitive and the primitive root of a necklace is a Lyndon word. Let $\Ns_k(n)$ (resp. $\Ls_k(n)$) denote the set of length-$n$ necklaces (resp. Lyndon words) over $\Sigma_k$ and let $\Nn_k(n)=|\Ns_k(n)|$ (resp. $\Ln_k(n)=|\Ls_k(n)|$). See sequences \seqnum{A000031}, \seqnum{A001037}, \seqnum{A001867}, \seqnum{A027376}, \seqnum{A001868}, and \seqnum{A027377} in the \emph{On-Line Encyclopedia of Integer Sequences}~\cite{OEIS} for sample values of $\Nn_2(n)$, $\Ln_2(n)$, $\Nn_3(n)$, $\Ln_3(n)$, $\Nn_4(n)$, and $\Ln_4(n)$, respectively.  The following two remarks are proved in~\cite{Cattell:2000}.

\begin{remark} \label{count:lyn}
    For $n,k \geq 2$, $\Ln_k(n) \in \Theta(\Nn_k(n))$.
\end{remark}
\begin{remark} \label{count:sum}
    For $n \geq 1$ and $k \geq 2$, $\sum\limits_{t=1}^n \Nn_k(t) \in \Theta(\Nn_k(n))$.
\end{remark}

\begin{definition}\label{def:pseudo}
Let $w$ be a word of length $n$, $a = \min(w)$, and $\ell$ be the length of a largest run of $a$'s in $w$. We say that $w$ is a \emph{quasinecklace} if the following conditions are satisfied:
    \begin{enumerate}[label=(\alph*)]
        \item The word $a^\ell$ is a prefix of $w$.
        \item If $w[i+1:i+\ell]=a^\ell$ for any $i$ where $1\leq i+\ell< n$, then $w[\ell+1]\leq w[i+\ell+1]$.
        \item If $w[n]=a$, then $w=a^n$.
    \end{enumerate}
\end{definition}

\noindent Let $\Qs_k(n)$ denote the set of length-$n$ quasinecklaces and let $\Qn_k(n)=|\Qs_k(n)|$. See Table~\ref{table:example} in Appendix~\ref{appendix:table} for a list of Lyndon words, necklaces, and quasinecklaces for $n=5$, $k=3$ as the appear in colex order. 

A binary word is a \emph{pseudonecklace} if its maximal prefix of the form $1^*2^*$ is lexicographically no larger than any other maximal factor of the form $1^*2^*$. Let $\Ps(n)$ denote the set of all length-$n$ pseudonecklaces. 
See Table~\ref{table:pseudo-quasi} in Appendix~\ref{appendix:pseudo} for a list of quasinecklaces and pseudonecklaces for $n=8$, $k=2$.

\section{Generating Quasinecklaces}\label{section:quasi}

In this section, we outline the approach we take to efficiently generating quasinecklaces in colex order.

The key step in the approach of Sawada et al.~\cite{Sawada&Williams&Wong:2017} to efficiently generate all \emph{binary} necklaces was to identify the set $\Ps(n)$ of pseudonecklaces. This step is important because pseudonecklaces can easily be generated in colex order and the number of pseudonecklaces is proportional to the number of binary necklaces (i.e., $|\Ps(n)|\in \mathcal{O}(\Nn_2(n))$). Given these two facts, generating binary necklaces efficiently requires only a simple and ``cheap'' test to rule out any pseudonecklace that is not a binary necklace. We follow the same approach but use a different, and slightly larger, set of length-$n$ words called quasinecklaces.

Quasinecklaces are very similar to pseudonecklaces. In fact, it is not too hard to show that every pseudonecklace is a quasinecklace and that there are quasinecklaces that are not pseudonecklaces. 
Our first step is to show that the number of quasinecklaces is proportional to the number of necklaces.
\begin{theorem}\label{theorem:QNeck}
Let $k\geq 2$ and $n\geq 1$ be integers.  Then $\Qn_k(n) \in \mathcal{O}(\Nn_k(n))$.
\end{theorem}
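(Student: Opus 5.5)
The plan is to double count rotations. We aim to show that $n\cdot \Qn_k(n) \le C\,k^n$ for a constant $C$ independent of $n$. Since $\Nn_k(n)\ge k^n/n$, this gives $\Qn_k(n)\in\mathcal{O}(\Nn_k(n))$.

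First we reduce to the case $\min(w)=1$. A quasinecklace with minimum symbol $a$ is a word over the alphabet $\{a,\ldots,k\}$, of size $k-a+1$, with minimum symbol equal to the smallest letter. So if we prove $|\{w\in\Qs_k(n): \min(w)=1\}|\le C\,k^n/n$ for every $k\ge 2$, summing over $a$ gives $\sum_{j\le k} C j^n/n \le C' k^n/n$, because $\sum_j (j/k)^n$ is bounded. The words $a^n$ contribute at most $k$, which is negligible.

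Now let $w$ be a quasinecklace with $\min(w)=1$ and $w\ne 1^n$. By (a) it starts with $1^\ell$, and by (c) it ends in a symbol other than $1$. Hence the linear runs of $1$'s in $w$ coincide with its \emph{cyclic} runs, and $\ell$ is the maximum cyclic run length. Consider the map $(w,i)\mapsto v=\sigma^i(w)$ for $0\le i<n$; its domain has size $n\,|\Qs_k(n)|$. Given $v$, the word $w$ is a rotation of $v$ that begins at the start of a cyclic run of $1$'s of maximum length $\ell$. Let $X(v)$ be the number of cyclic runs of $1$'s in $v$ of maximum length. Then the preimage of $v$ has at most $X(v)$ elements (condition (b) only shrinks it further), so
\[
n\,|\Qs_k(n)| \le \sum_{v\in\Sigma_k^n} X(v) = k^n\,\mathbb{E}[X],
\]
where $v$ is uniformly random. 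It therefore suffices to show $\mathbb{E}[X]=\mathcal{O}(1)$ uniformly in $n$.

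This expectation bound is the main obstacle. Write
\[
\mathbb{E}[X]=\sum_{\ell\ge1}\mathbb{E}\bigl[R_\ell\,\mathbf{1}(\text{no cyclic run of length}\ge \ell+1)\bigr],
\]
where $R_\ell$ counts cyclic runs of length exactly $\ell$. By linearity over the $n$ possible starting positions, a run of length exactly $\ell$ at a fixed position has probability about $k^{-\ell}$. Conditioned on that run, the remaining roughly $n-\ell-2$ positions are independent. Partition them into about $(n-\ell-2)/(\ell+1)$ disjoint blocks of length $\ell+1$; the event that no block is all $1$'s has probability at most $(1-k^{-\ell-1})^{(n-\ell-2)/(\ell+1)}$. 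With $\mu_\ell=nk^{-\ell}$, the $\ell$-th term is then at most about $\mu_\ell\exp(-c\,\mu_\ell/\ell)$. Terms with $\mu_\ell\le 1$ sum to $\mathcal{O}(1)$ because the $\mu_\ell$ decrease geometrically in $\ell$. Terms with $\mu_\ell$ large are tiny, since $x e^{-cx/\ell}$ is small once $x\gg \ell\log \ell$ and $\ell\le\log_k n$ in this range.

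To remove the stray factor $1/\ell$ cleanly, I would instead use the cleaner event that a fixed disjoint set of about $n/(2\ell+2)$ blocks, each of the form ``non-$1$, then $1^{\ell+1}$'', contains no all-$1$ block. This yields a bound of the form $\mu_\ell e^{-c'\mu_\ell/k}$ up to constants, and that sums to $\mathcal{O}(1)$ over $\ell$ as a geometric series in both directions around the $\ell$ with $\mu_\ell\approx1$. The small cases $n\le 2\ell+2$ contribute only $\mathcal{O}(1)$ values of $\ell$ with $\mu_\ell\le$ a constant, so they are harmless. Combining these estimates gives $\mathbb{E}[X]=\mathcal{O}(1)$ and hence the theorem.
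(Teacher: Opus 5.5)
Your reduction to $\min(w)=1$ and the rotation double count are sound. A quasinecklace $w\neq 1^n$ with $\min(w)=1$ ends in a non-$1$, so its runs of $1$'s are cyclic runs, and $n\,|\{w\in\Qs_k(n):\min(w)=1\}|\le\sum_v X(v)$.

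\textbf{The gap} is in $\mathbb{E}[X]=\mathcal{O}(1)$, which you rightly call the main obstacle and on which the whole proof rests.

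Your first estimate $\mu_\ell e^{-c\mu_\ell/\ell}$ does not sum to $\mathcal{O}(1)$. The terms with $1\le\mu_\ell\lesssim\ell/c$ are not small: the one with $\mu_\ell\approx\ell/c$ is about $\ell/(ce)\approx\log_k n/(ce)$. So this only gives $\Qn_k(n)=\mathcal{O}(\Nn_k(n)\log n)$.

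The repair, as you describe it, has the same defect. Testing $\Theta(n/\ell)$ fixed disjoint windows for the single pattern ``non-$1$, then $1^{\ell+1}$'' is $\Theta(n/\ell)$ independent trials, each succeeding with probability $(1-1/k)k^{-\ell-1}$. That again gives exponent $\Theta(\mu_\ell/(k\ell))$, so the bound $\mu_\ell e^{-c'\mu_\ell/k}$ does not follow.

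What is missing is a reason why a block of length $\Theta(\ell)$ contains a run of $\ell+1$ ones with probability $\Theta(\ell k^{-\ell-1})$ rather than $\Theta(k^{-\ell-1})$. One way to get it:
\begin{itemize}
\item Take disjoint blocks of length $2\ell+2$.
\item In each block, for the $\ell+1$ start positions $s$ in its first half, consider the event $E_s$: $v[s-1]\neq1$ and $v[s:s+\ell]=1^{\ell+1}$.
\item These events are pairwise disjoint: if $E_s$ holds and $s<s'\le s+\ell+1$, then $v[s'-1]=1$, so $E_{s'}$ fails.
\item Hence their probabilities add, and independence of the blocks gives $\exp(-\Theta(nk^{-\ell-1}))$ with no $1/\ell$.
\end{itemize}

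Even with that repair, your constant depends on $k$, and this is genuine, not slack in the estimate. Summing $\mu_\ell e^{-c'\mu_\ell/k}$ gives $\Theta(k)$. Indeed, for $n\approx k^2/10$ almost every run of $1$'s has length $1$, so $\mathbb{E}[X]=\Omega(k)$. Bounding the preimage by $X(v)$ and discarding condition (b) costs exactly this factor. Similarly, $\sum_{j\le k}(j/k)^n$ is about $1+k/(n+1)$, which is bounded only for fixed $k$. So your route, once repaired, yields $\Qn_k(n)\le C_k\Nn_k(n)$ with $C_k$ growing in $k$. That suffices for fixed $k$.

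The paper avoids probability entirely. It maps each non-necklace quasinecklace $a^\ell b u$ to $a^\ell(b-1)u$. The image is a necklace because, by condition (b), every other occurrence of $a^\ell$ is followed by a symbol at least $b>b-1$. Injectivity holds because a non-necklace quasinecklace $a^{\ell_1}b_1u_1$ contains $a^{\ell_1}$ inside $u_1$. If it shared an image with one whose longest $a$-run $\ell_2$ is shorter, then $u_1$ would be a suffix of the other word's tail, forcing that word to contain $a^{\ell_1}$. This gives $\Qn_k(n)\le 2\Nn_k(n)$ uniformly in both $n$ and $k$.
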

\begin{proof}
    It is sufficient to show that there is a one-to-one map from $\Qs_k(n) - \Ns_k(n)$ to $\Ns_k(n)$. Let $w\in \Qs_k(n) - \Ns_k(n)$. Since $w$ is not a necklace, we have that $w\neq a^n$ for all $a\in \Sigma_k$. Therefore, we can write $w=a^\ell b u$ where $a< b$ and $\ell\geq 1$. We define the mapping $w \mapsto w'$ where $w'= a^\ell (b-1) u$. We prove that the mapping is one-to-one, since it is clear that $w'$ is a necklace. 
    
     Suppose $w_1,w_2\in \Qs_k(n) - \Ns_k(n)$ and $w_1\neq w_2$. We can write $w_1=a_1^{\ell_1}b_1 u_1$  and $w_2=a_2^{\ell_2}b_2 u_2$  where $a_1<b_1$, $a_2<b_2$, and $\ell_1,\ell_2\geq 1$. 
    For the sake of a contradiction, suppose $a_1^{\ell_1}(b_1-1) u_1=a_2^{\ell_2}(b_2-1) u_2$. If $\ell_1=\ell_2$, then $w_1=w_2$, a contradiction. Assume $\ell_1>\ell_2$. Since $w_1$ is not a necklace, there is a proper suffix $v$ of $w_1$ that is smaller than $w_1$. This suffix $v$ must begin with $a_1^{\ell_1}b_1$ and must be a suffix of $u_1$. But since $\ell_1>\ell_2$ and $a_1^{\ell_1}(b_1-1) u_1=a_2^{\ell_2}(b_2-1) u_2$, we have that $u_1$ is a suffix of $u_2$. This implies that $a_1^{\ell_1}=a_2^{\ell_1}$ is a factor of $u_2$. But this contradicts $w_2$ being a quasinecklace. Therefore, the mapping is one-to-one.
\end{proof}

Before we present our algorithm to generate quasinecklaces, we introduce two functions, the second of which is central to our algorithm.  Let $\runs(u)=(a,\ell,b,\ell_1)$ such that
\begin{itemize}
    \item $a=\min(u)$,
    \item $\ell$ is the maximum number of contiguous $a$'s in $u$, 
    \item $b$ is the smallest symbol that follows $a^\ell$ in $u$ where $b = a$ if $a^\ell$ is a suffix of $u$, and
    \item $\ell_1$ is the maximum number of contiguous $a$'s in the prefix of $u$.
\end{itemize} 
Let $\Next(j,u, \runs(u))$ be the largest symbol $c\in \Sigma_k$ such that $cu$ is a suffix of some quasinecklace of length $n=j+|u|$. When expanding $\runs(u)$ in $\Next(j,u, \runs(u))$, we abuse notation and write $\Next(j,u,a,\ell,b,\ell_1)$. For example, if $k=3$, then $\Next(112,3,\runs(112))=\Next(112,3,1,2,2,2)=2$ since $3112$ is not the suffix of a length-$6$ quasinecklace, but $2112$ is. In our recursive algorithm, we maintain a suffix of some quasinecklace in $\Qs_k(n)$ and use $\Next$ to compute the possible symbols we could use to extend this suffix. In Lemma~\ref{lemma:branch}, we fully characterize $\Next(j,u,\runs(u))$.

\begin{remark}\label{remark:pad}
    If $u$ is a suffix of some quasinecklace in $\Qs_k(n)$, then $1^{n-|u|-1}cu$ is a quasinecklace in $\Qs_k(n)$ for all symbols $c$ where $1\leq c \leq \Next(j,u,\runs(u))$. 
\end{remark}

\begin{lemma}\label{lemma:branch}
    Let $n$ and $j$ be integers such that $1 \leq j \leq n$. Let $u$ be the length-$(n-j)$ suffix of some quasinecklace $w\in \Qs_k(n)$. Let $(a,\ell,b,\ell_1)=\runs(u)$. Let $b_1 = u[\ell_1+1]=w[j+\ell_1+1]$ if $\ell_1+1 \leq n-j$; otherwise let $b_1=u[1]=w[j+1]$.  
    Then
    {\footnotesize
    \begin{numcases}{\Next(j,u,\runs(u)) = } 
      1 , & \text{if $u$ ends with $1$;} \label{case1}\\ 
      k , & \text{if $j=n$ or $j>\ell +1$ or both $j>1$ and $a>1$;} \label{case2}\\ 
      b , & \text{if $j=\ell+1$ or $\ell = n-1$;}\label{case3} \\  
      a , & \text{if $j=1$ and $w[n]>a$ and $b_1 \leq b$ and $\ell_1+1= \ell$;} \label{case4} \\
      a , & \text{if $j=1$ and $w[n]>a$ and $\ell_1+1> \ell$;} \label{case5} \\
      \max(a-1,1) , & \text{otherwise.} \label{case7}
   \end{numcases}}
\end{lemma}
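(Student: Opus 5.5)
We will prove the lemma by checking, case by case, which symbols $c$ make $cu$ a suffix of some length-$n$ quasinecklace, and showing that the largest one is the value listed. The cases are read in order, so each later case assumes the earlier ones fail. Two tools recur. The first is Remark~\ref{remark:pad}: once $cu$ is a valid suffix, any smaller symbol is valid too. So it suffices to show that the claimed value $c^\ast$ is achievable (exhibit a quasinecklace ending in $c^\ast u$, usually $1^{j-1}c^\ast u$ or $a^{j-1}c^\ast u$) and that $c^\ast+1$ is not (a violated condition of Definition~\ref{def:pseudo}). The second tool is that $u$ is already a suffix of $w\in\Qs_k(n)$, which constrains $a,\ell,b,\ell_1$; for instance condition (c) forces $w[n]>\min(w)$ unless $w$ is constant.

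\emph{Case~\eqref{case1}.} If $u$ ends in $1$, then condition (c) forces the quasinecklace to be $1^n$, so only $c=1$ works.

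\emph{Case~\eqref{case2}.} If $j=n$, then $u=\epsilon$ and any single symbol $c$ can start a word ending in $c\neq 1$, so the answer is $k$ (case~\eqref{case1} already covers the situation where $u$ is empty in the relevant way). If $j>\ell+1$, we take $1^{j-1}ku$. Either $1<a$, in which case the new minimum is $1$ with a run longer than any $1$-run in $u$, or $a=1$ and $1^{j-1}$ is a strictly longer run than $\ell$. In both cases the prefix run dominates, so conditions (a) and (b) only compare $k$ against symbols following shorter runs, and these comparisons are vacuous. If $j>1$ and $a>1$, then $1^{j-1}ku$ works for the same reason, since $u$ contains no $1$.

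\emph{Case~\eqref{case3}.} Here $j=\ell+1$ with $a=1$, since otherwise case~\eqref{case2} applies. The prefix $1^{\ell}$ then has the same length as the maximal run inside $u$, and condition (b) requires the symbol after the prefix run, namely $c$, to be at most $b$. Thus $c=b$ is feasible via $1^{\ell}bu$, and $c=b+1$ violates (b). The subcase $\ell=n-1$ is handled similarly.

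\emph{Case~$j=1$ (cases~\eqref{case4}--\eqref{case7}).} This is the main obstacle: the word is now fully determined as $cu$, so there is no padding freedom. We split on whether $c=a$ or $c<a$.
\begin{itemize}
\item If $c<a$, then $c$ becomes the unique minimum with $\ell=1$, and conditions (a)--(c) hold trivially because $u$ does not end in $c$. This shows $a-1$ is achievable when $a>1$, giving the fallback value $\max(a-1,1)$ in case~\eqref{case7}.
\item If $c=a$, the prefix run has length $\ell_1+1$. When $\ell_1+1>\ell$ this run is the unique longest run, so (a) and (b) hold, and (c) holds because $w[n]>a$; this is case~\eqref{case5}. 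When $\ell_1+1=\ell$, the prefix run ties with the maximal runs, and (b) reduces to requiring that the symbol following the prefix run, $b_1$, be at most $b$; this is case~\eqref{case4}. When $\ell_1+1<\ell$, condition (a) fails. When $w[n]=a$, condition (c) fails.
\item If $c>a$, then either $u$ has a longer prefix run issue or (a) fails, because $a^\ell$ is no longer a prefix.
\end{itemize}

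The care needed in the $j=1$ case is to verify that the definition of $b_1$ (including the wraparound convention when $\ell_1+1>n-j$) matches the letter compared in condition (b).
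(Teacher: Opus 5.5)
Your strategy is the same as the paper's: read the cases in order, use the downward closure from Remark~\ref{remark:pad}, exhibit the claimed symbol, and rule out anything larger. Cases (\ref{case1})--(\ref{case5}), and case (\ref{case7}) for $j=1$, $a>1$, are handled correctly.

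\textbf{The gap.} You title the last block ``Case $j=1$ (cases (\ref{case4})--(\ref{case7}))''. But when (\ref{case1})--(\ref{case3}) fail, you only know that $u$ does not end in $1$, that $j\le \ell<n-1$, and that $j=1$ or $a=1$. So the ``otherwise'' case (\ref{case7}) also contains every configuration with $a=1$ and $2\le j\le \ell$, for instance $n=7$, $j=2$, $u=11212$. Your proposal never treats this configuration.

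Here the claimed value is $\max(a-1,1)=1$, and the argument differs from your $j=1$ branch:
\begin{enumerate}
\item Any quasinecklace ending in $cu$ contains the factor $1^{\ell}$. By condition (a) of Definition~\ref{def:pseudo}, it therefore begins with $1^{\ell'}$ for some $\ell'\ge \ell\ge j$. This forces position $j$, i.e.\ $c$, to be $1$.
\item The value $c=1$ is attained, because the given $w$ then satisfies $w[1:j]=1^j$, i.e.\ $w=1^ju$.
\end{enumerate}
This is exactly the $a=1$ bullet in the paper's proof of case (\ref{case7}). Without it your case analysis is incomplete.

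\textbf{Two smaller points.} First, in your $j=1$ branch with $a=1$, if (\ref{case4}) and (\ref{case5}) both fail, your own characterization shows that no $c$ works at all. You should state that this configuration is vacuous rather than appeal to a ``fallback value'', which your argument only justifies as $a-1$ when $a>1$. It is vacuous because the given $w$ has $w[1]=\min(w)=1$, so $w=1u\in\Qs_k(n)$, and then one of (\ref{case4}) or (\ref{case5}) must hold.

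Second, the subcase $\ell=n-1$ of (\ref{case3}) is not analogous to $j=\ell+1$, which relies on condition (b). The subcase $\ell=n-1$ forces $j=1$, $u=a^{n-1}$, $b=a$, and $a\ge 2$, since $a=1$ would fall under (\ref{case1}). It is settled by $a^n\in\Qs_k(n)$, together with condition (a) ruling out any first symbol larger than $a$.
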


\begin{proof} Let $c\in \Sigma_k$ be the largest symbol such that $cu$ is a suffix of a quasinecklace in $\Qs_k(n)$. We prove that $\Next_n(j,u,\runs(u))=c$ for each case.

\begin{enumerate}
    \item[(\ref{case1})] Suppose $u$ ends with $1$. There is exactly one quasinecklace in $\Qs_k(n)$ that ends with $1$ and it is $1^n$. Thus $c=1$.
\end{enumerate}
    For the remaining cases we have that $u$ does not end with $1$.
\begin{enumerate}
    \item[(\ref{case2})] If $j=n$, then $c=k$ since the last symbol of a quasinecklace is constrained. If $j>\ell+1$ or both $j>1$ and $a>1$, then $1^{j-1} ku$ is a quasinecklace, so $c=k$.
\end{enumerate}
    For the remaining cases we have $j<n$, $j\leq \ell+1$, and either $j=1$ or $a=1$.
\begin{enumerate}
    \item[(\ref{case3})]
    \begin{itemize}
    \item    Suppose $j= \ell+1$. First we prove that $w=1^\ell b u$ is a quasinecklace. We must have $b>1$, since $b=1$ implies that $u$ ends with $1$. We immediately get that $w$ satisfies all conditions of Definition~\ref{def:pseudo}. Therefore $w$ is a quasinecklace
    
    Now we show that $b$ is the largest symbol such that $bu$ is a suffix of some quasinecklace in $\Qs_k(n)$. Suppose $d>b$ and consider the word $du$. Since $j=\ell+1>1$ and either $j=1$ or $a=1$, we have that $a=1$. Any quasinecklace in $\Qs_k(n)$ that has $du$ as a suffix must begin with $1^\ell$. Now $w=1^\ell du\in \Qs_k(n)$ by Remark~\ref{remark:pad}. But $1^\ell d$ is a prefix of $w$, which contradicts condition (b), since $d > b$. Therefore $c=b$. 
    
    \item Suppose $\ell = n-1$. Then $u=b^{n-1}$. Clearly $bu=b^n$ is a quasinecklace. It is also clear that $c=b$ is the largest symbol such that $w=cu$ is a quasinecklace. 
    \end{itemize}

\end{enumerate}
    For the remaining cases we have $j<\ell+1$ and $\ell < n-1$. For cases (\ref{case4}) and (\ref{case5}) we have $c\leq a$, since $j=1$ and any quasinecklace in $\Qs_k(n)$ that has $u$ as a suffix must not begin with a symbol larger than $a$. Thus, to show that $c=a$ for the following two cases, it is sufficient show that $w=au$ is a quasinecklace.
\begin{enumerate}
    \item[(\ref{case4})] Suppose $j=1$, $w[n]>a$, $b_1 \leq b$, and $\ell_1+1= \ell$. Then $w$ begins with $a^\ell b_1$. But this means that $w$ satisfies condition (a) of Definition~\ref{def:pseudo}. Condition (b) is satisfied because $b_1\leq b$. Condition (c) is satisfied since $w[n] \neq a$. Therefore $w$ is a quasinecklace. 
\end{enumerate}
For the remaining cases we have $j>1$,$w[n]\leq a$, $b_1 > b$, or $\ell_1+1\neq \ell$.
\begin{enumerate}    
    \item[(\ref{case5})] Suppose $j=1$, $w[n]>a$, and $\ell_1+1> \ell$. Since $\ell_1+1 > \ell$, $w$ begins with at least $\ell+1$ copies of $a$, the unique longest run of $a$'s in $w$. Therefore, $w$ satisfies conditions (a) and (b) of Definition~\ref{def:pseudo}. Condition (c) is also satisfied because $w[n]>a$. Thus, $w$ is a quasinecklace.
\end{enumerate}
For the remaining cases we have $j>1$, $w[n]\leq a$, or $\ell_1+1\leq \ell$.
\begin{enumerate} 
    \item[(\ref{case7})] Suppose all the conditions in cases (\ref{case1})-(\ref{case5}) do not hold. We show that $c=\max(a-1,1)$ by showing that if $a=1$, then $c=1$, and if $a>1$, then $c=a-1$.

    \begin{itemize}
        \item Suppose $a=1$. From the end of the proof of case (\ref{case3}), we have $j<\ell +1$, which implies $c\leq a$.  By Remark~\ref{remark:pad} we get that $1^{n-j}u$ is a quasinecklace. Thus, $c=1$.
        \item Suppose $a>1$. Then we know that $j=1$, since the end of the proof of case (\ref{case2}) shows that $j=1$ or $a=1$. We prove that the word $w=du$ where $d=a-1$ is a quasinecklace. Conditions (a) and (b) of Definition~\ref{def:pseudo} are satisfied since $w$ begins with the unique copy of the smallest symbol in $w$. Condition (c) is satisfied since $\min(u)=a$ and so $w[n]\geq a >a-1$. Therefore, $w$ is a quasinecklace and $c\geq a-1$.

        Now we show that $c\leq a-1$ by showing that $w=du$ is not a quasinecklace for $d\geq a$. If $w[n]=a$, then condition (c) is not satisfied since $\ell < n-1$ and so $w\neq a^n$. So $w[n]>a$. From the end of the proof of case (\ref{case5}) we have that either $\ell_1+1 = \ell$ or $\ell_1 + 1 < \ell$. If $\ell_1+1=\ell$, then we must have $b_1>b$ by the end of case (\ref{case4}). But now we have that $a^\ell b_1$ is a prefix of $w$, which violates condition (b). So suppose that $\ell_1 + 1 < \ell$. Now we have that $w$ begins with fewer than $\ell$ copies $a$'s, which violates condition (a). Therefore, we have $c\leq a-1$. 
    \end{itemize}
\end{enumerate}
\end{proof}

The value $\Next(j,u,\runs(u))$ can clearly be computed in constant time. The function \Call{Gen}{} in Algorithm~\ref{algo:QN} generates the quasinecklaces $\Qs_k(n)$  in colex order. The function $\Call{Visit}{}$ processes, or outputs, the current word $w$. The initial call is $\Call{Gen}{n,0,0,0,0}$.

\begin{algorithm}[H] 
\footnotesize
\caption{The function \Call{Gen}{} generates all  quasinecklaces in $\Qs_k(n)$ in colex order.}\label{algo:QN}
\begin{algorithmic}[1]
\Function{Gen}{$j,a,\ell,b,\ell_1$}
    \State $MaxSymbol \gets \Next(j,w[j+1:n],a,\ell,b,\ell_1)$
    \State $b_1 \gets w[j+\ell_1+1]$ \textbf{if} {$\ell_1 +1 \leq n-j$} \textbf{else} $a$ 
    \If{$MaxSymbol = 1$ \textbf{or} $j = 0$}   \Call{Visit}{ } 
    \Else
        \For{$c\gets 1$ \textbf{to} $MaxSymbol$}
            \State $w[j]\gets c$
            \If{$j=n$} $\Call{Gen}{n-1,c,1,c,1}$
            \ElsIf{$c < a$} $\Call{Gen}{j-1,c,1,w[j+1],1}$
            \ElsIf{$c=a$ \textbf{and} ($\ell_1+1 > \ell$ \textbf{or} $\ell_1+1 = \ell$ \text{and} $b_1 \leq b$)} $\Call{Gen}{j-1,a,\ell_1+1,b_1,\ell_1+1}$
            \ElsIf{$c=a$} $\Call{Gen}{j-1,a,\ell,b,\ell_1+1}$
            \Else \ $\Call{Gen}{j-1,a,\ell,b,0}$
            \EndIf

        \EndFor
        \State $w[j]\gets 1$
    \EndIf
\EndFunction
\end{algorithmic}
\end{algorithm}

We briefly justify that this algorithm generates each quasinecklace in constant-amortized time. Throughout this proof sketch we refer to the recursion tree of the algorithm where each node represents a suffix of some quasinecklace in $\Qs_k(n)$, and each leaf is a quasinecklace in $\Qs_k(n)$. Simply put, a generation algorithm runs in constant-amortized time if the total amount of work done divided by the number of objects generated is bounded by a constant. Therefore, it is sufficient to show that the total number of nodes in the recursion tree is proportional to the total number of leaf nodes and that the amount of work done by each node is bounded by a constant. Let $MaxSymbol=\Next(j,w[j+1:n],\runs(w[j+1:n]))$. When $MaxSymbol=1$, the algorithm immediately stops and processes $w$, as there is only one quasinecklace with suffix $w[j+1:n]$, namely $1^jw[j+1:n]$. We avoid any additional work involved with filling in $1^j$ by initializing $w$ to $1^n$ and maintaining the prefix to be $1^j$ after all recursive calls on Line 13. When $MaxSymbol>1$, the algorithm performs $MaxSymbol$ recursive calls to add $MaxSymbol$ different symbols to the beginning of $w[j+1:n]$. Therefore, each internal node of the recursion tree has at least two children, which implies that the total number of nodes is proportional to the total number of leaves. By inspection, it is clear that every node in the recursion tree is responsible for a constant amount of work.  Since the depth of the recursion is at most $n$, the algorithm uses $\mathcal{O}(n)$ space.
Thus, we get the following theorem.

\begin{theorem}\label{theorem:quasi}
Algorithm~\ref{algo:QN} generates the quasinecklaces $\Qs_k(n)$ in colex order in constant amortized time using $\mathcal{O}(n)$ space.
\end{theorem}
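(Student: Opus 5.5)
The plan is to treat Algorithm~\ref{algo:QN} as a walk over a recursion tree and verify three things: correctness (each node carries a suffix of a quasinecklace, and each leaf is a distinct quasinecklace of $\Qs_k(n)$ with none missed), colex order, and the cost bounds.

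\textbf{Invariant.} I would prove by induction on the depth of the recursion the following statement. When \Call{Gen}{$j,a,\ell,b,\ell_1$} is called:
\begin{itemize}
\item[(i)] $w[j+1:n]$ is the suffix of some quasinecklace in $\Qs_k(n)$;
\item[(ii)] $w[1:j]=1^j$;
\item[(iii)] the parameters equal $\runs(w[j+1:n])$.
\end{itemize}
Item (i) follows from Lemma~\ref{lemma:branch}. Every symbol $c\le MaxSymbol$ gives a valid extension $cw[j+1:n]$, and Remark~\ref{remark:pad} supplies the witness. Conversely, no suffix $cu$ with $c>MaxSymbol$ extends to a quasinecklace, by the definition of $\Next$. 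Hence every quasinecklace is reached, and along exactly one root-to-leaf path.

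Item (iii) is a case check over the branches of the update:
\begin{itemize}
\item Prepending $c<a$ gives a new minimum with run length $1$, followed by $w[j+1]$.
\item Prepending $c=a$ extends the prefix run to $\ell_1+1$. It becomes the (preferred) longest run exactly when $\ell_1+1>\ell$, or when $\ell_1+1=\ell$ and its successor $b_1\le b$. Otherwise $(\ell,b)$ is unchanged.
\item Prepending $c>a$ resets $\ell_1$ to $0$.
\end{itemize}
The case $j=n$ initializes the data for a single symbol. The subtle point is the tie-breaking that defines $b$ as the smallest follower of a maximal run. This has to agree with condition (b) of Definition~\ref{def:pseudo}, and I expect this bookkeeping to be the main place needing care.

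\textbf{Leaves.} At a node with $MaxSymbol=1$, the only completion is $1^jw[j+1:n]$, and it already sits in $w$ by (ii). The algorithm visits it. Item (ii) is maintained by resetting $w[j]\gets 1$ after the loop.

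\textbf{Colex order.} The algorithm fixes symbols from the last position leftwards and tries $c$ in increasing order. So two quasinecklaces are visited in the order of the first position from the right at which they differ, and that is colex order.

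\textbf{Cost.}
\begin{itemize}
\item $\Next$ is computed in $O(1)$ from the stored parameters via the explicit cases of Lemma~\ref{lemma:branch}. Retrieving $b_1$ and $w[n]$ is constant-time array access. So each node does constant work apart from its loop.
\item The loop cost is charged to the children.
\item Every internal node has $MaxSymbol\ge 2$ children, so the number of internal nodes is less than the number of leaves. The total work is therefore $O(\Qn_k(n))$, which is constant amortized per output.
\item The recursion depth is at most $n$ with $O(1)$ data per frame, and together with the array $w$ this gives $O(n)$ space.
\end{itemize}
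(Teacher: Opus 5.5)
Your proposal is correct and follows essentially the same argument as the paper. The paper also uses a recursion tree in which every internal node has $MaxSymbol\ge 2$ children and does constant work, leaves occur at $MaxSymbol=1$ and are completed by the maintained prefix $1^j$, and the depth is at most $n$, giving $\mathcal{O}(n)$ space. Your explicit invariant for the $\runs$ parameters and your colex-order argument fill in details that the paper's sketch leaves implicit. The one step that you and the paper both merely assert is that a node with $MaxSymbol=1$ has $1^jw[j+1:n]$ as its only completion. This follows from Lemma~\ref{lemma:branch}: for $j\ge 2$ that value arises only when $a=1$ and $j\le \ell$, or when $w[j+1:n]=1^{n-j}$, and in both cases every completing quasinecklace must begin with $1^j$.
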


It is important to note that Theorem~\ref{theorem:quasi} assumes the algorithm is not printing out the quasinecklaces, as printing a word $w$ is inherently $\mathcal{O}(|w|)$.

\subsection{Enumeration} \label{subsection:enumeration}
In this section, we provide a simple recurrence for the number $\Qn_k(n)$ of quasinecklaces. First we define a quantity that we use to count $\Qn_k(n)$. Let $A_{a,\ell,b}(n)$ be the number of quasinecklaces in $\Qs_k(n)$ that begin with $a^\ell b$ where $a$, $b$ are symbols such that $a<b$.

\begin{lemma}\label{lemma:quasiCount}
        Let $n$, $\ell$ be positive integers $a$, $b$ be symbols such that $a<b$. Then \[A_{a,\ell, b}(n) = \begin{cases} 
      0 , & \text{if $n\leq \ell$;} \\
      1 , & \text{if $n=\ell+1$;} \\
      (k-a)\sum\limits_{i=1}^{\ell}A_{a,\ell,b}(n-i) + (k-b+1)A_{a,\ell,b}(n-\ell-1) , & \text{otherwise.} \\
   \end{cases}\]
\end{lemma}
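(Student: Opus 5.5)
The plan is to prove the recurrence by a bijective decomposition that peels the last ``block'' off the end of the word. First I will restate membership in terms of blocks. A word $w$ of length $n$ beginning with $a^\ell b$, $a<b$, lies in $\Qs_k(n)$ exactly when:
\begin{enumerate}[label=(\roman*)]
\item every symbol of $w$ is at least $a$;
\item every run of $a$'s has length at most $\ell$;
\item every run of exactly $\ell$ $a$'s that is followed by some symbol is followed by a symbol $\geq b$;
\item $w[n]\neq a$.
\end{enumerate}
This is read off Definition~\ref{def:pseudo}. Since $w$ starts with $a^\ell b$, we have $w\neq a^n$, so condition (c) becomes (iv). Condition (iv) means no run of $a$'s is terminal, so condition (b) becomes (iii). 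Condition (a), together with $\min(w)=a$ and $\ell$ being the largest run, gives (i) and (ii). Conversely, (i)--(iv) force $\min(w)=a$ and make $\ell$ the largest run length, because the prefix already contains the run $a^\ell$.

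The base cases are immediate. If $n\le\ell$, no word of length $n$ begins with $a^\ell b$. If $n=\ell+1$, the only candidate is $a^\ell b$, which satisfies (i)--(iv).

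For $n\ge\ell+2$, I use the same characterization to parse $w$ from the right. By (iv), $w$ ends in some $c>a$, and this $c$ is preceded by a maximal run $a^i$ with $0\le i\le \ell$. Since $w[\ell+1]=b\neq a$ and $n>\ell+1$, this run, together with $c$, lies entirely within positions $\ell+2,\dots,n$. Hence $w=w'a^ic$, where $w'$ has length $n-i-1\ge \ell+1$, begins with $a^\ell b$, and ends in a non-$a$ symbol. I will check that $w'$ satisfies (i)--(iv), because its runs of $a$'s and their followers are a subset of those of $w$. Conversely, appending $a^ic$ to any $w'$ counted by $A_{a,\ell,b}(n-i-1)$ gives a valid $w$ exactly when:
\begin{itemize}
\item $i<\ell$ and $c\in\{a+1,\dots,k\}$, which gives $k-a$ choices; or
\item $i=\ell$ and $c\in\{b,\dots,k\}$, which gives $k-b+1$ choices, forced by (iii).
\end{itemize}
For this direction I need that the new run $a^i$ does not merge with an earlier run, which holds because $w'$ ends in a non-$a$ symbol.

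This gives a bijection between the words counted by $A_{a,\ell,b}(n)$ and triples $(w',i,c)$ with the constraints above. Summing over $i=0,\dots,\ell-1$, the lengths $n-1,\dots,n-\ell$ contribute $(k-a)\sum_{i=1}^{\ell}A_{a,\ell,b}(n-i)$, and $i=\ell$ contributes $(k-b+1)A_{a,\ell,b}(n-\ell-1)$. Any term with argument $\le\ell$ vanishes, matching the first base case. The main point needing care is the equivalence (i)--(iv), in particular that removing or appending a block neither changes the longest run length $\ell$ nor turns a run of length $\ell$ into a terminal run.
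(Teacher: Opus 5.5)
Your proof is correct and follows essentially the same route as the paper. Both arguments peel off the final block $a^i c$ (a maximal terminal run of at most $\ell$ copies of $a$ followed by the last symbol), note that the remaining prefix is a shorter quasinecklace beginning with $a^\ell b$, and count $k-a$ or $k-b+1$ choices for the last symbol according to whether $i<\ell$ or $i=\ell$. Your version is more careful than the paper's sketch in two respects: it gives an explicit characterization (i)--(iv) and checks the converse direction of the bijection, and it correctly requires the last symbol to be $\geq b$ where the paper writes $d>b$.
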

\begin{proof}
    Let $w$ be a quasinecklace in $\Qs_k(n)$ beginning with $a^\ell b$. If $n\leq \ell$, then any length-$n$ word cannot begin with $a^\ell b$, thus $A_{a,\ell,b}(n)$. If $n=\ell+1$, then there is exactly one quasinecklace in $\Qs_k(n)$  that begins with $a^\ell b$, namely $w=a^\ell b$, thus $A_{a,\ell,b}(n)=1$. 
    
    Suppose $n>\ell+1$. Write $w=a^\ell u$ for some word $u$ that begins with $b$. We look at the suffixes of $u$.  Since $w$ cannot have the factors $a^{\ell+1}$ or $a^\ell b'$ for some $b' < b$, we have that either $u=vca^\ell d$ where $v$ is a word and $c$, $d$ are symbols such that $c>a$ and $d>b$ or $u=v'ca^{i-1} d'$ where $v'$ is a word, $i$ is some integer such that $1\leq i \leq \ell$, and $d>a$ is a symbol. But now both $a^\ell vc$ and $a^\ell v' c$ are both quasinecklaces since they are prefixes of the quasinecklace $w$ and $c>a$. There are $A_{a,\ell, b}(n-\ell-1)$ quasinecklaces of the form $a^\ell vc$ and $k-b+1$ choices for the symbol $d$. Therefore, there are $A_{a,\ell,b}(n-\ell-1)$ quasinecklaces in $\Qs_k(n)$ of the form $w=a^\ell vca^\ell d$. Additionally, there are $A_{a,\ell,b}(n-i)$ quasinecklaces of the form $a^\ell v' c$ and $k-a$ choices for the symbol $d'$, Thus, there are $(k-a)A_{a,\ell,b}(n-i)$ quasinecklaces in $\Qs_k(n)$ of the form $w=a^\ell v'ca^\ell d'$. Summing $(k-a)A_{a,\ell,b}(n-i)$ over all possible $i$ in addition to summing $A_{a,\ell,b}(n-\ell-1)$, we get the desired formula for $n>\ell+1$.    
\end{proof}


\begin{theorem}\label{theorem:quasiCount}
  Let $n$ and $k$ be integers such that $n\geq 1$ and $k\geq 2$. Then  \[Q_k(n) =k+ \sum_{a=1}^{k-1}\sum_{b=a+1}^k\sum_{\ell=1}^{n-1} A_{a,\ell,b}(n).\]
\end{theorem}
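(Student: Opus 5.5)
We will prove the formula by partitioning $\Qs_k(n)$ according to the shape of the prefix that condition (a) of Definition~\ref{def:pseudo} forces on each quasinecklace.

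\textbf{Split off the constant words.} For each $a\in\Sigma_k$ the word $a^n$ is a quasinecklace: its minimum is $a$, its longest run of $a$'s is $\ell=n$, so (a) holds trivially, (b) is vacuous because $i+\ell<n$ is impossible, and (c) holds. These are the $k$ quasinecklaces counted by the leading term $k$.

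\textbf{Every other quasinecklace has a unique prefix $a^\ell b$.} Let $w\in\Qs_k(n)$ be non-constant, and let $a=\min(w)$ and $\ell$ be the length of a longest run of $a$'s.
\begin{itemize}
\item By condition (a), $a^\ell$ is a prefix of $w$.
\item By condition (c), $w[n]\neq a$, since otherwise $w=a^n$ would be constant. Hence $\ell\le n-1$.
\item So $w[\ell+1]=:b$ exists. It satisfies $b\neq a$ because the initial run is a longest run, and $b\ge a$ because $a$ is the minimum. Thus $a<b$, which forces $a\le k-1$ and $a+1\le b\le k$.
\end{itemize}
Therefore $w$ begins with $a^\ell b$ for some triple $(a,\ell,b)$ in the summation range. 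This triple is determined by $w$: $a=\min(w)$, $\ell$ is the length of the initial run of $a$'s, and $b$ is the next symbol. So distinct triples give disjoint sets of words.

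\textbf{Reading $A_{a,\ell,b}(n)$ as a count over exactly these words.} Conversely, every quasinecklace counted by $A_{a,\ell,b}(n)$ is non-constant, since it contains both $a$ and $b\neq a$. It is therefore counted once, in the term for its own triple. This bijection yields
\[
\Qn_k(n)=k+\sum_{a=1}^{k-1}\sum_{b=a+1}^{k}\sum_{\ell=1}^{n-1}A_{a,\ell,b}(n).
\]

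\textbf{The subtle point.} The care needed is in the interpretation of $A_{a,\ell,b}(n)$. We must read it as counting quasinecklaces beginning with $a^\ell b$ whose minimum symbol is $a$ and whose initial run $a^\ell$ is a longest run of $a$'s. This is how Lemma~\ref{lemma:quasiCount}'s recurrence treats it: it forbids the factors $a^{\ell+1}$ and $a^\ell b'$ with $b'<b$, and it uses $k-a$ choices for symbols exceeding $a$.

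Under that reading, the triple $(a,\ell,b)$ assigned to $w$ is exactly the indexing triple. This rules out double counting, for instance a word beginning with $a^\ell b$ where $\ell$ is not the maximal run length. The obstacle is only to make this reading explicit and to confirm that it is consistent with the definition given before Lemma~\ref{lemma:quasiCount}. Beyond that, the argument is a direct partition.
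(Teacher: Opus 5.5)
Your proof is correct and follows the same route as the paper: split off the $k$ constant words $a^n$, then partition the remaining quasinecklaces by their unique prefix $a^\ell b$ with $a<b$. Your ``subtle point'' is not actually an obstacle. Since $A_{a,\ell,b}(n)$ counts only quasinecklaces, condition (a) already forces $a=\min(w)$ and forces the initial run to be a longest run (as your own second paragraph shows), and $b\neq a$ pins down $\ell$, so the literal definition already gives the disjoint partition with no reinterpretation needed.
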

\begin{proof}
A quasinecklace in $\Qs_k(n)$ is either of the form $a^n$ or begins with the word $a^\ell b$ where $a$ and $b$ are symbols such that $a<b$. There are $k$ quasinecklaces of the form $a^n$. There are $A_{a,\ell,b}(n)$ quasinecklaces that begin with $a^\ell b$. Summing $A_{a,\ell,b}(n)$ over all possible $a$, $\ell$, $b$, in addition to summing $k$, we get the desired formula.
\end{proof}

%

\section{Generating Necklaces and Lyndon words}\label{section:neck}

In this section, we describe how to modify Algorithm~\ref{algo:QN} to generate necklaces and Lyndon words in colex order. Applying an $\mathcal{O}(n)$ test on each quasinecklace to determine if it is a necklace leads to a $\mathcal{O}(n)$-amortized time algorithm. To obtain constant-amortized time, we maintain two additional pieces of information as each quasinecklace is generated: 
\begin{itemize}
    \item $\suf(w)$: the length of the lexicographically smallest suffix of $w$, and 
    \item $\PSuf(w,r)$: the length of the longest suffix of $w$ whose primitive root is of length $r$. 
\end{itemize}
By applying the values $\suf(w)$ and $\PSuf(w,r)$ we can test if a word $w$ is a Lyndon word or necklace in constant time applying the following lemma.

\begin{lemma}\label{lemma:suf}
    Let $w$ be a length-$n$ word where $r=\suf(w)$ and $p=\PSuf(w,r)$. Then 
    \begin{enumerate}
        \item[(1)] $w$ is a Lyndon word if and only if $r=n$, and
        \item[(2)] $w$ is a necklace if and only if $p=n$.
    \end{enumerate}
\end{lemma}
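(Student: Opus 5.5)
The proof rests on the characterization of Lyndon words by suffixes, already recalled in the Preliminaries, together with the standard fact that $w$ is a necklace if and only if $w=x^m$ for a Lyndon word $x$. Throughout, $\suf(w)$ is the length of the lexicographically least nonempty suffix. Distinct suffixes have distinct lengths, so this suffix is well defined. Recall also that a proper prefix counts as smaller.

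For (1), I would use the equivalent definition: $w$ is Lyndon iff $w$ is strictly smaller than each of its proper suffixes. That holds iff the smallest suffix of $w$ is $w$ itself, i.e.\ iff $r=n$. This part needs nothing else.

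For (2), forward direction, let $w$ be a necklace and $x$ its primitive root. By the Preliminaries $x$ is Lyndon, and $w=x^m$ with $m=n/|x|$. I claim $x$ is the smallest suffix of $w$, so $r=|x|$. Every other suffix has one of two forms.
\begin{itemize}
\item It is $x^i$ with $i\ge 2$. Then $x$ is a proper prefix of it, so $x$ is smaller.
\item It is $s x^i$, where $s$ is a nonempty proper suffix of $x$ and $i\ge 0$. Since $x$ is Lyndon, $x<s$. Moreover $s$ cannot be a prefix of $x$, since then $s<x$; and $x$ cannot be a prefix of $s$, since $|s|<|x|$. So $x$ and $s$ first differ at some position $\le |s|$, with $x$ smaller there. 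Hence $x<sx^i$.
\end{itemize}
Thus $r=|x|$. The whole word $w=x^m$ has primitive root of length $r$, so $p=\PSuf(w,r)=n$.

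For (2), converse, suppose $p=n$, so the primitive root of $w$ has length $r$. Write $w=x^{m}$ with $|x|=r$. Then $x$ is the length-$r$ suffix of $w$, and by the definition of $r$ it is the smallest suffix of $w$. Every proper suffix of $x$ is also a suffix of $w$, distinct from $x$, hence strictly larger than $x$. So $x$ is Lyndon. Now take any rotation of $w$; it has the form $(zy)^m$, where $x=yz$ and $y,z$ are nonempty. Since $x$ is Lyndon, $x<zy$ strictly. As $|x|=|zy|$, neither word is a prefix of the other, so they differ at some position within the first $r$ symbols. That difference decides the comparison, giving $w=x^m<(zy)^m$. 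Hence $w$ is a necklace.

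The main obstacle is the forward direction of (2). It requires checking that the suffix $x$ beats suffixes like $sx^i$ under the prefix convention. The key point is that a Lyndon word is unbordered, which rules out $s$ being a prefix of $x$.
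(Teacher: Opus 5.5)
Your proof is correct and follows the same route as the paper: both use that a necklace is a power $x^m$ of a Lyndon word, and both identify $x$ with the lexicographically smallest suffix, giving $r=|x|$ and $p=n$. Your version is more complete, since you check directly that $x$ beats every suffix $sx^i$ (where the paper only asserts $u=v$) and you write out the converse that the paper dismisses as ``reversible''; the rotations by multiples of $r$, which your converse skips, simply return $w$ and are harmless because the necklace definition uses $\leq$.
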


\begin{proof} We prove both statements:
\begin{enumerate}
    \item[(1)] Follows from the definition of a Lyndon word.
    \item[(2)]  Suppose the word $w$ is a necklace. Then $w=u^j$ for some $j\geq 1$ where $u$ is a Lyndon word. The word $v=w[n-r+1:n]$ is a Lyndon word, since it is the smallest suffix of $w$. We must have $u=v$, for otherwise we would have that either $u$ or $v$ are not Lyndon words. Therefore, $w$ is the longest suffix of $w$ whose primitive root is of length $r$, and so $p=n$. The steps of this proof are reversible, so the backwards direction has also been covered.
\end{enumerate}
\end{proof}

Lemma~\ref{lemma:suf} implies that it is enough to know the values of $\suf(w)$ and $\PSuf(w,\suf(w))$ to determine whether $w$ is a Lyndon word or necklace. 
Computing these values for every quasinecklace $w$ generated is computationally expensive.
A better approach is to incrementally update these values for each new suffix generated.
Lemmas~\ref{lemma:sufInc} and \ref{lemma:psufInc} show how to compute $\suf(w)$ and $\PSuf(w,\suf(w))$ incrementally. Corollaries~\ref{corollary:sufInc} and \ref{corollary:psufInc} show how to compute $\suf(w)$ and $\PSuf(w,\suf(w))$ if $w=1^j u$ for some $j\geq 1$ and we only know $\suf(u)$ and $\PSuf(u, \suf(u))$.

\begin{lemma}\label{lemma:sufInc}
Let $n$ and $j$ be integers such that $n\geq j \geq 1$. Let $u=w[j+1:n]$ be the length-$(n-j)$ suffix of a  quasinecklace $w \in \Qs_k(n)$, $r=\suf(u)$, and $c$ be an integer such that $1\leq c \leq \Next(j,u,\runs(u))$. Then
    \[\suf(cu) = \begin{cases} 
      |cu| , & \text{if $cw[j+1:j+r-1] < w[n-r+1:n]$;} \\ 
      r , & \text{otherwise.}
   \end{cases}\]
\end{lemma}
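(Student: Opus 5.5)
The plan is a direct comparison argument. The hypotheses that $w$ is a quasinecklace and that $c \le \Next(j,u,\runs(u))$ only make the setting well defined; they are not used in the argument itself. The key observation is that the suffixes of $cu$ are exactly $cu$ together with the suffixes of $u$. Distinct suffixes have distinct lengths, so they are pairwise distinct words and the lexicographically smallest suffix is unique. Let $v=w[n-r+1:n]$ be the smallest suffix of $u$, so $\suf(u)=r$. Then the smallest suffix of $cu$ is either $cu$ itself or $v$. The whole proof therefore reduces to deciding when $cu<v$.

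Since $1\le r\le |u|<|cu|$, the word $cu$ has a length-$r$ prefix, namely $x=c\,w[j+1:j+r-1]$. I will compare $x$ with $v$, which has the same length $r$, and split into three cases.
\begin{itemize}
\item If $x<v$, they first differ at some position $i\le r$ with $x[i]<v[i]$. That same position witnesses $cu<v$, so $\suf(cu)=|cu|$.
\item If $x=v$, then $v$ is a proper prefix of $cu$. By the definition of lexicographic order this gives $v<cu$, so $\suf(cu)=r$.
\item If $x>v$ with $x\neq v$, they differ at some position $i\le r$ with $v[i]<x[i]$, so again $v<cu$ and $\suf(cu)=r$.
\end{itemize}
The first case is exactly the first line of the displayed formula, and the last two together form the ``otherwise'' line.

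The only delicate point is the boundary case $j=n$, where $u=\epsilon$. Here I will adopt the convention $\suf(\epsilon)=0$. With $r=0$, both $x$ and $v$ are empty and the formula would return $0$, whereas $\suf(c)=1$. So either the statement is meant for $|u|\ge1$, or the value $\suf(c)=1=|cu|$ is set directly, matching the algorithm's $j=n$ branch. I will state this explicitly and treat it separately. Beyond that, the main thing to check carefully is the equality case: a proper prefix is strictly smaller than the longer word, which is why ties go to $r$ rather than to $|cu|$.
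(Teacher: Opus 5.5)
Your proof is correct and is the same direct comparison the paper uses: the paper splits on whether $cw[j+1:j+r-1]<w[n-r+1:n]$ and folds your equality and greater-than cases into a single ``$\geq$'' case, so your proper-prefix argument just supplies the step it leaves implicit. Your caveat about $j=n$ is also right: with $r=\suf(\epsilon)=0$ the formula gives $0$ instead of $\suf(c)=1$, and the paper's algorithm avoids this by setting $r=p=1$ directly in the $j=n$ branch.
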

\begin{proof}
    If $cw[j+1:j+r-1] < w[n-r+1:n]$, then $cu$ is the smallest suffix of $cu$ and therefore is primitive, so $\suf(cu)=|cu|$. Otherwise, we have that $cw[j+1:j+r-1] \geq w[n-r+1:n]$, which implies $\suf(cu)=\suf(u)=r$.  
\end{proof}
\noindent
The next result follows from $j$ applications of the previous lemma.  
\begin{corollary}\label{corollary:sufInc}
    Let $n$ and $j$ be integers such that $n\geq j \geq 1$. Let $u=w[j+1:n]$ be the length-$(n-j)$ suffix of a quasinecklace $w \in \Qs_k(n)$, $r=\suf(u)$, and $w' = 1^jw[j+1:r]$. Then \[\suf(1^ju) = \begin{cases} 
      n , & \text{if $w'[1:r] < w[n-r+1:n]$}; \\ 
      r , & \text{otherwise.}
   \end{cases}\]
\end{corollary}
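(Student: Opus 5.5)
The plan is to prove the corollary by induction on $j$, prepending the symbol $1$ one at a time and applying Lemma~\ref{lemma:sufInc} at each step. We track how the smallest suffix evolves. The statement involves $w' = 1^j w[j+1:r]$, which I read as the length-$r$ prefix of $1^ju$: $j$ ones followed by the first $r-j$ symbols of $u$. The comparison is then between this prefix and the smallest suffix $v = w[n-r+1:n]$ of $u$.

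\textbf{Legality of the prepends.} First I check that each prepended $1$ satisfies the hypothesis of Lemma~\ref{lemma:sufInc}. By Remark~\ref{remark:pad}, every word $1^{i}u$ with $0 \le i \le j$ is a suffix of a quasinecklace in $\Qs_k(n)$. Moreover $\Next \ge 1$ always holds, so $c=1$ is a legal choice at every stage.

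\textbf{Inductive step.} Write $u_i = 1^i u$. Once some stage produces $\suf(u_i)=|u_i|$, the word $u_i$ is Lyndon and begins with $1$ (or $u$ itself begins with $1$). Prepending a further $1$ gives a word smaller than all of its suffixes, since every suffix starts with a symbol $\ge 1$ and a shorter run of leading $1$'s. Hence $\suf$ stays equal to the full length, and after $j$ steps $\suf(1^ju)=n$.

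If instead no stage jumps, $\suf$ stays at $r$ throughout. It remains to show that a jump occurs at some stage exactly when $1^j$ followed by $w[j+1:\cdot]$, truncated to length $r$, is smaller than $v$.

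\textbf{Main obstacle: matching the $j$ comparisons to one.} The final step is to show that the $j$ successive comparisons in Lemma~\ref{lemma:sufInc}, namely $1^i w[j+1:\cdot]$ truncated to length $r$ compared with $v$, collapse to the single comparison using $i=j$. The argument I would try is monotonicity. Replacing a symbol by a $1$ at an earlier position can only decrease the word lexicographically, because the prefix $1^{i+1}\cdots$ is at most $1^i\cdots$. So if the comparison succeeds at stage $i$, it also succeeds at stage $j$.

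Conversely, if it fails at every intermediate stage, the $\suf$ value stays $r$, and the stage-$j$ comparison is precisely the condition on $w'$. There is a subtlety in applying Lemma~\ref{lemma:sufInc} when $\suf$ has already become the full length: the relevant $r$ changes between stages. The self-sustaining Lyndon argument above handles this case.

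Combining these, $\suf(1^ju)=n$ if and only if $w'[1:r] < w[n-r+1:n]$, and $\suf(1^ju)=r$ otherwise.
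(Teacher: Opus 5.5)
Your route is the paper's, which proves this corollary in one line as ``$j$ applications'' of Lemma~\ref{lemma:sufInc} with $c=1$. You supply the details of that iteration. The legality check via Remark~\ref{remark:pad} is fine, and so is the first-jump argument: if there is no jump before stage $j$, the stage-$j$ comparison is exactly $w'[1:r]$ versus $w[n-r+1:n]$.

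However, your persistence step fails as written. The parenthetical ``(or $u$ itself begins with $1$)'' lets the Lyndon stage be $u_0=u$. The claim is then false for $u=1$, which is a legitimate instance ($w=1^n$, $j=n-1$). There $\suf(u)=|u|$, but $11$ is not smaller than its suffix $1$. In fact $\suf(1^n)=1=r$, which is what the corollary gives, since $w'[1:1]=1=w[n:n]$. Your justification (``a shorter run of leading $1$'s'') silently assumes no suffix consists only of $1$'s, and that is exactly what breaks. The fix is to count only jumps produced by Lemma~\ref{lemma:sufInc} at stages $i\ge 1$; if $r=|u|$, stage $1$ already compares against $v=u$. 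Then $u_i$ is a Lyndon word of length at least $2$ beginning with $1$, so it does not end in $1$ and is not all $1$'s. Hence $1u_i<u_i$, and since $u_i$ is smaller than its own proper suffixes, $1u_i$ is Lyndon.

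The justification of your monotonicity step is also inaccurate. The word $(1^{i+1}u)[1:r]$ is not obtained from $(1^iu)[1:r]$ by replacing a symbol with $1$; it is that word shifted right with a $1$ prepended. The correct reason is that comparing $1^{i+1}u$ with $1^iu$ reduces to comparing $1u$ with $u$. Write $u=1^tx$ with $x$ empty or $x[1]>1$. Then $1u=1^{t+1}x$ and $u$ agree on the first $t$ positions, and $1u$ has $1<x[1]$ at position $t+1$. So $(1^{i+1}u)[1:r]\le(1^iu)[1:r]$ for every $i$, with equality when $u$ is all $1$'s. With these two repairs your argument is complete.

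The same observation gives a shorter direct proof. For $u$ not all $1$'s it shows $1^ju<1^iu$ for every $0\le i<j$. So the smallest suffix of $1^ju$ is either $1^ju$ itself or $v=w[n-r+1:n]$, which is precisely the corollary's dichotomy. The all-$1$'s case $u=1^{n-j}$ is checked by hand.
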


\begin{lemma}\label{lemma:psufInc}
Let $n$ and $j$ be integers such that $n\geq j \geq 1$. Let $u=w[j+1:n]$ be the length-$(n-j)$ suffix of a quasinecklace $w \in \Qs_k(n)$. Let $r=\suf(u)$ and $p=\PSuf(u,\suf(u))$. Let $c$ be an integer such that $1\leq c \leq  \Next(j,u,\runs(u))$. Then
\[\PSuf(cu,r) = \begin{cases} 
      |cu| , & \text{if $cw[j+1:j+r-1]<w[n-r+1:n]$;} \\ 
      |cu| , & \text{if $cw[j+1:j+r-1]=w[n-r+1:n]$ and $|cu|- p = r$;} \\  
      p , & \text{otherwise.} 
   \end{cases}\]
\end{lemma}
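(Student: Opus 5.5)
We split the argument into three cases according to how the new prefix $x=cw[j+1:j+r-1]$ (the length-$r$ prefix of $cu$) compares with $v=w[n-r+1:n]$, the lexicographically smallest suffix of $u$. Throughout we use two facts. First, by the definition of $\suf$ and the fact that a smallest suffix is unbordered, $v$ is a Lyndon word of length $r$. Second, the longest suffix of $u$ with primitive root of length $r$ is the longest suffix of $u$ of the form $y\,v^{t}$ with $y$ a proper suffix of $v$; it has length $p$, and it is periodic with period $r$ and ends with $v$.

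\emph{Case 1: $x<v$.} By Lemma~\ref{lemma:sufInc}, $cu$ is its own smallest suffix. It is therefore a Lyndon word, hence primitive, and its primitive root is itself. The statement reads the quantity in the first case as $|cu|$, which matches the convention used in Lemma~\ref{lemma:suf}: the value tracked is $\PSuf(cu,\suf(cu))$, and $\suf(cu)=|cu|$ here. We give this interpretation explicitly, since the lemma is written with $r$ but is applied with the updated $\suf$.

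\emph{Case 2: $x=v$.} Here $\suf(cu)=r$ is unchanged, so we must decide whether the period-$r$ suffix of length $p$ extends by one symbol to the left. Prepending $c$ preserves period $r$ exactly when $c=w[j+1+r-1]$ with the indexing shifted appropriately, which is precisely the condition that the length-$r$ prefix of $cu$ equals $v$, given that the earlier portion is already $r$-periodic. The subtlety is that $\PSuf(cu,r)$ can only grow past $p$ if the old periodic suffix already reached the start of $u$, that is, if $p=|u|$.

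Under the stated condition $|cu|-p=r$, we want to show that $cu=v\cdot(\text{suffix of length } p)$ and that $cu$ is $r$-periodic. Here the length-$p$ suffix is all of $u$ minus its first $r-1$ symbols, which is $r$-periodic and begins at a rotation boundary. Concatenating the copy $x=v$ gives a word whose primitive root has length $r$, so the value is $|cu|$. We must also check that the root is exactly of length $r$ and not shorter. This holds because $v$ is Lyndon and hence primitive.

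\emph{Case 3: everything else.} Either $x>v$, or $x=v$ with $|cu|-p\neq r$. In both situations we show that no suffix of $cu$ longer than $p$ has primitive root of length $r$. Any such suffix would contain the suffix of length $p+1$, and $r$-periodicity at the new position would force agreement with a symbol already fixed. This contradicts the maximality of $p$ for $u$, except when the new suffix is all of $cu$ and $x=v$ with $|cu|-p=r$, and that situation is Case 2. The remaining suffixes are unchanged, so $\PSuf(cu,r)=p$.

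The main obstacle is the precise bookkeeping in Case 2. We must show that $|cu|-p=r$ together with $x=v$ is exactly equivalent to the new full-length suffix being of the form $v^{t}$ with period $r$, and not merely "$y v^t$" for a proper suffix $y$. The quasinecklace structure and the Lyndon property of $v$ are what rule out partial-period prefixes. I would first verify this on the alignment $cu=v\,u[r:|u|]$ and check that $u[r:]$ is a power of $v$.
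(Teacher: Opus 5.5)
Your Case~1 is fine, and you are right to make explicit that the first case has to be read as $\PSuf(cu,\suf(cu))$ with $\suf(cu)=|cu|$. The paper's proof uses that reading silently.

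\textbf{The gap: your second ``fact'' is wrong.} A suffix of $u$ whose primitive root has length $r$ has the form $x^m$ with $x$ primitive and $|x|=r$. Since its last $r$ symbols are $v$, it equals $v^m$, so its length is a multiple of $r$. A word $y\,v^t$ with $y$ a nonempty proper suffix of $v$ merely has \emph{period} $r$; its primitive root is not of length $r$. Replacing ``primitive root of length $r$'' by ``period $r$'' changes the value of $p$, and with your value the lemma is false.

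Take the quasinecklace $w=121212$ with $j=1$, so $u=21212$, $r=2$, $v=12$. The true value is $p=4$ (the suffix $1212$), whereas your characterization gives $p=5$ (the suffix $2\cdot 1212$). Here $c=1$ is admissible (case~(\ref{case4}) of Lemma~\ref{lemma:branch} gives $\Next(1,u,\runs(u))=1$). Then $x=12=v$ and $cu=(12)^3$, so $\PSuf(cu,2)=6$. The test $|cu|-p=r$ detects this only with $p=4$.

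The same confusion is behind your assertion that $\PSuf$ can grow past $p$ only when $p=|u|$. Here it grows although $p=4\neq 5=|u|$. In general that assertion contradicts the lemma's own condition $|cu|-p=r$ unless $r=1$.

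\textbf{Consequence for Case~2.} Case~2 is not proved. You defer exactly the step that matters, namely that $u[r:|u|]$ is a power of $v$. You attribute it to the quasinecklace structure and the Lyndon property of $v$. But the example shows that suffixes $y\,v^t$ with $y\neq\varepsilon$ really do occur in suffixes of quasinecklaces, so no structural argument can exclude them.

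What makes the step immediate is the definition. The length-$p$ suffix of $u$ is $v^{p/r}$, and when $|cu|-p=r$ this suffix is exactly $u[r:|u|]$. So $x=v$ gives $cu=v^{p/r+1}$, whose primitive root is $v$ because $v$ is Lyndon, hence primitive.

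\textbf{Case~3 should be argued directly,} not through ``$r$-periodicity at the new position.'' The only suffix of $cu$ that is not a suffix of $u$ is $cu$ itself, so $\PSuf(cu,r)\in\{p,|cu|\}$. Suppose $cu=v^m$. Then $x=v$, and since $|cu|>r$ we have $m\geq 2$ and $u[r:|u|]=v^{m-1}$. Hence $p\geq |cu|-r$. Since $p$ is a multiple of $r$ with $p\leq|u|<|cu|$, this forces $p=|cu|-r$. So $\PSuf(cu,r)=p$ whenever $x>v$ or $|cu|-p\neq r$.

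With these repairs your outline becomes the paper's proof.
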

\begin{proof}
    If $cw[j+1:j+r-1]<w[n-r+1:n]$, then $cu$ is the smallest suffix of $cu$ and is therefore primitive, so $\PSuf(cu,r)=|cu|$. If $cw[j+1:j+r-1]=w[n-r+1:n]$ and $|cu|- p = r$, then $cw[j+1:j+r-1]=v$ and $w[j+r:n]=v^i$ for some $i\geq 1$ where $v=w[n-r+1:n]$. Since $v$ is a Lyndon word by definition and $cu=v^{i+1}$, we have that $cu$ is the longest suffix of $cu$ whose primitive root is of length $r$. Therefore $\PSuf(cu,r)=|cu|$. Otherwise, we have that $cw[j+1:j+r-1]>w[n-r+1:n]$ or $|cu|-p\neq r$. In the case that $cw[j+1:j+r-1]>w[n-r+1:n]$, we have that the word $v=w[n-r+1:n]$ is not a prefix of $cu$. Therefore, the longest suffixes of $cu$ and $u$ that have a primitive root of length $r$ are the same, and so $\PSuf(cu,r)=p$. If $cw[j+1:j+r-1]=w[n-r+1:n]$ and $|cu|-p\neq r$, then the primitive root of $cu$ is not of length $r$. Thus, the longest suffixes of $cu$ and $u$ that have a primitive root of length $r$ are the same, and so $\PSuf(cu,r)=p$.
\end{proof}
\noindent
The next result follows from $j$ applications of the previous lemma. It is important to note that Corollary~\ref{corollary:psufInc} does not hold when $u=1$, but Lemma~\ref{lemma:psufInc} does. Corollary~\ref{corollary:psufInc} assumes that $u$ is of the form $v'v^i$ where $r=\suf(u)=|v|$ and $p=\PSuf(u,r)=|v^i|$. The corollary's second case tests whether prepending $1^j$ completes exactly one additional copy of $v$ (i.e., is $1^ju=v^{i+1}$?). When $u=1$, prepending $1^j$ adds $j$ new copies of $v$ instead of just one, a scenario the second case cannot detect. Therefore, the corollary falls through to the last case and gives the wrong value.
\begin{corollary}\label{corollary:psufInc}
    Let $n$ and $j$ be integers such that $n\geq j \geq 1$. Let $u=w[j+1:n]\neq 1$ be the length-$(n-j)$ suffix of a quasinecklace $w \in \Qs_k(n)$, $r=\suf(u)$, $p=\PSuf(u,\suf(u))$, and $w'=1^jw[j+1:r]$. Then \[\PSuf(1^ju,r) = \begin{cases} 
      n , & \text{if $w'[1:r]<w[n-r+1:n]$;} \\ 
      n , & \text{if $w'[1:r]=w[n-r+1:n]$ and $n- p = r$;} \\  
      p , & \text{otherwise.} 
   \end{cases}\]
\end{corollary}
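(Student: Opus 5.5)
We prove Corollary~\ref{corollary:psufInc} by iterating Lemma~\ref{lemma:psufInc} $j$ times, prepending the symbol $1$ each time. Alongside this we track $\suf$ through Corollary~\ref{corollary:sufInc}, so that at every step we know which case of the lemma applies.

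Write $u=v'v^i$ with $v=w[n-r+1:n]$ Lyndon, $|v|=r$, and $p=|v^i|$. Put $u_t=1^tu$ for $0\le t\le j$. First we check that each $u_t$ is a legitimate intermediate suffix, so that $c=1\le \Next$ is always allowed. This follows from Remark~\ref{remark:pad}, since $1^{n-|u|}u=1^ju$ is a quasinecklace whenever $u$ is a suffix of one.

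Next we split according to the comparison of $w'[1:r]$ with $v$. At step $t$, the lemma compares the length-$r$ prefix of $u_t$ with $v$. Here $u_t$ is a suffix of $1^ju$, so these prefixes are prefixes of $1^tu$. Since $1$ is the smallest symbol, the length-$r$ prefixes $x_t$ of $u_t$ are non-increasing in $t$. Indeed $x_{t+1}=1x_t[1:r-1]$, and comparing it with $x_t$ is a comparison of a word beginning with $1$ (the minimum symbol) against $x_t$.

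\emph{Case $w'[1:r]<v$, i.e.\ $x_j<v$.} Let $t_0$ be the first index with $x_{t_0}<v$ (or $=v$, handled below). From step $t_0$ on, the first case of the lemma fires repeatedly, giving $\PSuf(u_t,r)=|u_t|$ and hence $\PSuf(1^ju,r)=n$. One subtlety: once $\suf$ jumps to $|u_t|$, the lemma is applied with the new $r$. So we must argue that the statement about $\PSuf(\cdot,r)$ for the original $r$ still propagates. Concretely, if $x<v$ then $1x[1:r-1]<v$ as well, except in degenerate situations where $v$ itself starts with $1$'s. I will handle those by noting that $v$ being Lyndon and $\ne 1$ forces $v$ to start with its run of $1$'s followed by a larger symbol.

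\emph{Case $x_j=v$.} Equality for $t=j$ means $1^jw[j+1:r]=v$. Because $u\ne 1$, $v$ contains a non-$1$ symbol, so at most one intermediate step $t$ can produce $x_t=v$; two equal steps would force $v$ to be a power of $1$, i.e.\ $v=1$ and then $u=1$. Thus exactly one extra copy of $v$ is completed, and only at $t=j$, where $|u_j|=n$. The second case of the lemma then gives $n$ iff $n-p=r$. Before that step the comparisons are all $>$, so $p$ is preserved. This is exactly where the exclusion $u\ne1$ enters, matching the paper's remark.

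\emph{Case $x_j>v$.} Then all $x_t>v$ by monotonicity, the third case of the lemma applies at every step, and $\PSuf$ stays $p$.

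The main obstacle is the bookkeeping in the first case, because Lemma~\ref{lemma:psufInc} is phrased with $r=\suf$ of the current suffix, and $\suf$ changes mid-iteration. I would first try to show directly that once some prefix $1^tu$ has minimal suffix equal to itself, every longer $1^{t'}u$ does too. This holds because prepending the minimal symbol $1$ to a word beginning with $1$ that is smaller than all its suffixes keeps it smallest. It then follows that $1^ju$ is Lyndon, so by Lemma~\ref{lemma:suf} its $\PSuf$ equals $n$ directly, and the iteration subtlety disappears.
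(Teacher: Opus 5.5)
Your route is the paper's own: its proof is just ``$j$ applications of Lemma~\ref{lemma:psufInc}'', and tracking the length-$r$ prefixes $x_t$ of $1^tu$ is a faithful expansion of that. However, one step is false, and it is exactly where the corollary as stated breaks. You claim that because $u\neq 1$, $v$ contains a symbol other than $1$, and that two equality steps force $v=1$ ``and then $u=1$''.

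In fact $v=1$ if and only if $u$ ends in $1$. For a suffix of a quasinecklace this means $u=1^s$ for some $s\geq 1$ (a suffix of $w=1^n$), not just $u=1$. For such $u$ every $x_t$ equals $1=v$. So the equality case of Lemma~\ref{lemma:psufInc} fires at every one of the $j$ steps, and each step adds a copy of $v$. The one-shot test $n-p=r$, which uses the original $p$, only detects a single added copy.

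Concretely, take $n=4$, $j=2$, $w=1111$, $u=11\neq 1$. Then $r=1$, $p=2$, $w'[1:1]=1=w[4:4]$ and $n-p=2\neq r$, so the formula returns $2$, whereas $\PSuf(1111,1)=4$. So this step cannot be repaired. The hypothesis must be strengthened to ``$u$ does not end in $1$''; for $j=1$ the words $u=1^s$ happen to work, but for $j\geq 2$ they all fail. This costs nothing for the algorithm, which handles $w[n]=1$ separately (the call \texttt{Visit(1,N)} in the code), and the paper's remark about $u=1$ has the same oversight. Under the corrected hypothesis $v\neq 1$, so no $x_t$ equal to $v$ can be $1^r$. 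Your ``at most one equality step'' is then valid, and the rest of your case analysis goes through.

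There are also two smaller points. First, your justification of monotonicity is incomplete, and its strictness is exactly what the argument above relies on. Write $x_t=1^sy$ with $y$ empty or starting with a symbol $>1$. If $y$ is nonempty, then $x_{t+1}=1x_t[1:r-1]$ has $1$ at position $s+1$, where $x_t$ has $y[1]>1$, so $x_{t+1}<x_t$. Hence $x_{t+1}=x_t$ only when $x_t=1^r$.

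Second, in the first case your caveat ``except in degenerate situations where $v$ itself starts with $1$'s'' is unfounded, since $x<v$ always gives $1x[1:r-1]\le x<v$. It is also beside the point: after $\suf$ jumps, the lemma compares against the new smallest suffix $1^{t_0}u$, not against $v$. Your fallback is the right fix, and it needs no iteration at all. From $x_j<v$ you get $1^ju<v\le s$ for every suffix $s$ of $u$. Also $1^ju<1^tu$ for $1\le t<j$, because $u$ contains a symbol larger than $1$. So $1^ju$ is Lyndon. Note that in this case the value $n$ is $\PSuf(1^ju,\suf(1^ju))$ with the \emph{new} $\suf(1^ju)=n$. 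Read literally with the old $r$, one has $\PSuf(1^ju,r)<n$, since $1^ju$ is primitive of length $n>r$. This is the same abuse of notation as in Lemma~\ref{lemma:psufInc}, and your appeal to Lemma~\ref{lemma:suf} implicitly uses the intended reading.
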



 The computation of $\suf(cu)$ and $\PSuf(cu,\suf(cu))$ requires the knowledge of whether $cw[j+1:j+r-1]<w[n-r+1:n]$ and $cw[j+1:j+r-1]=w[n-r+1:n]$. In Algorithm~\ref{algo:cmp}, we describe a function \Call{CMP}{} which has four parameters, $p,\ell,q,\ell_1$, and returns the result of a comparison between $w[p:n-q+p]$ and $w[q:n]$. This function has the following preconditions: 
 \begin{itemize}
 \item $p<q$, 
 \item $w[q:n]$ is the smallest suffix of $w[p+1:n]$, 
 \item $w[p:n]$ begins with $a^{\ell_1}$ where $a$ is a symbol, and
 \item if $q<n$, then $w[q:n]$ begins with $a^{\ell}$.
 \end{itemize}
 
The function \Call{CMP}{} is largely straightforward: it first compares the lengths of the runs of $a$ at the beginning of both words, and then performs a symbol-by-symbol comparison between the two words. However, Lines 2 and 6 require closer examination. In the case that $q=n$, we have that $w[q:n]=a$ but the integer $\ell$ may not be $1$; Line 2 handles this special case separately. If $p+i=q$, then $w[p:n] = uuv$ where $w[q:n]=uv$. Since $w[q:n]$ is the smallest suffix of $w[p+1:n]$, we have $uv < v$, which implies $uuv < uv$ and $w[p:n-q+p] < w[q:n]$. Thus, on Line 6 we immediately return that $w[p:n-q+p] < w[q:n]$ when $p+i = q$.

\begin{algorithm}[H] 
\small
\caption{Returns $-1$ if $w[p:n-q+p] <w[q:n]$, $0$ if $w[p:n-q+p]=w[q:n]$, and $1$ if $w[p:n-q+p] > w[q:n]$.} 

\label{algo:cmp}
\begin{algorithmic}[1]
\Function{CMP}{$p,\ell_1,q,\ell$}
    \If{$q = n$} \Return $0$ \EndIf
    \If{$\ell_1 < \ell$} \Return $1$ \EndIf
    \If{$\ell_1 > \ell$} \Return $-1$ \EndIf
    \For{$i\gets \ell$ \textbf{to} $n-q$}
        \If{$p+i = q$} \Return $-1$
        \EndIf
        \If{$w[p+i] > w[q+i]$} \Return $1$
        \ElsIf{$w[p+i] < w[q+i]$} \Return $-1$
        \EndIf
    \EndFor
    \State \Return $0$
\EndFunction
\end{algorithmic}
\end{algorithm}

\subsection{Modifying Algorithm~\ref{algo:QN}}
\label{sec:modify}

Assume that $w$, $j$, $c$, $a$, $\ell$, $b$, and $\ell_1$ are as in Algorithm~\ref{algo:QN}.
The following modifications can be applied to this algorithm to generate necklaces and Lyndon words: 
\begin{enumerate}
    \item Add two new parameters $r$ and $p$ to $\Call{Gen}{}$ such that  $r = \suf(w[j+1:n])$ and $p=\PSuf(w[j+1:n],r)$. 
    \item Update $r$ and then $p$ before each recursive call. 
    \item Before calling the function \Call{Visit}{}, update $\ell_1$, $r$, and $p$, since we may have $j>0$ which would mean $\ell_1$, $r$, and $p$ are not up-to-date.
    
    If $j>0$, we first update $\ell_1$ in the following way. If $w[j+1:n]=1$, we set $p\coloneq n$. If $\min(w[j+1:n])=1$, set $\ell_1\coloneq  \ell_1+j$; otherwise set $\ell_1\coloneq j$. Then we update $r$ using Corollary~\ref{corollary:sufInc} and $p$ using Corollary~\ref{corollary:psufInc} if $w[j+1:n]\neq 1$. 
    \item In the function \Call{Visit}{}, process $w$ as a Lyndon word if $r=n$, and as a necklace if $p=n$.
    \item The initial call is $\Call{Gen}{n,0,0,0,0,0,0}$ where the additional two zeroes at the end represent values for parameters $r$ and $p$.
    \item Change the call to $\Call{Gen}{}$ on Line 8 to $\Call{Gen}{n-1,c,1,c,1,1,1}$.
\end{enumerate}
See Appendix~\ref{appendix:neck} for a C program that implements these algorithms to generate quasinecklaces, necklaces, and Lyndon words.

\subsection{Analysis} \label{sec:analysis}

Recall that each node in the recursion tree for \Call{Gen}{} represents a suffix of some quasinecklace in $\Qs_k(n)$, or equivalently a recursive call to \Call{Gen}{}. We have already proved that the total number of nodes in the recursion tree of \Call{Gen}{} is proportional to $\Qn_k(n)$, which is itself proportional to $\Nn_k(n)$ by Theorem~\ref{theorem:QNeck}. 
From Remark~\ref{count:lyn}, $\Ln_k(n)$ is $\Theta(\Nn_k(n))$.
Additionally, apart from the calls to \Call{CMP}{}, the work done by each recursive call is constant. Therefore, to prove that our algorithm for generating necklaces or Lyndon words in colex order runs constant-amortized time, it is sufficient to show that the total amount of work done by all calls to \Call{CMP}{} is proportional to $\Nn_k(n)$, or equivalently, $\Qn_k(n)$.

We analyze the work done by all calls to $\Call{CMP}{p,\ell_1,q,\ell}$. Let $w$ be a length-$n$ word.  Recall that $\Call{CMP}{}$ returns the result of the comparison between $w[p:n-q+p]$ and $w[q:n]$ given that the preconditions outlined before Algorithm~\ref{algo:QN} are satisfied. In particular, $w[q:n]$ is the lexicographically smallest suffix of $w[2:n]$ and $p<q$.  Since the total number of nodes in the recursion tree for $\Call{Gen}{}$ is proportional to $\Qn_k(n)$, any constant amount of work done by a call to $\Call{CMP}{}$ is, when summed over all nodes, is proportional to $\Qn_k(n)$. Therefore, in our analysis of $\Call{CMP}{}$ we omit Lines 2 through 4, the first iteration of the \textbf{for} loop, and the last iteration of the \textbf{for} loop. The remaining iterations of the \textbf{for} loop compare $w[p+\ell+1:p+i]$ and $w[q+\ell+1:q+i]$ symbol-by-symbol for some integer $i$ such that $\ell<i<\min(n-q,q-p)$. We create a mapping that maps at most two comparisons to a single element of $\Qs_k(n-p+1)$.  For simplicity, and without loss of generality, we assume $p=1$. 

First, we define the domain of our mapping. We describe the words $w$ that are processed by the \textbf{for} loop when the index variable $i$ is such that $\ell < i < \min(n-q,q-1)$. 
Since $i$ is not the last iteration of the \textbf{for} loop, the conditions in the first $i-\ell+1$ iterations all evaluated to false. Additionally, all the conditions on Lines 2 through 4 evaluated to false. Therefore, we have  $w[1:i+1]=w[q:q+i]$ and furthermore $w[1:\ell+1]=a^\ell b$ and $w[q:q+\ell]=a^\ell b$,
where $a=\min(w)$ and $a < b$.
Since $w[q:n]$ is the lexicographically smallest suffix of $w[2:n]$, $w$ is a quasinecklace.
We explicitly define the domain in the following way. Let $\mathbf{D}(n)$ denote the set of all pairs $(w,i)$ of quasinecklaces $w \in \Qs_k(n)$ and integers $i$ such that:
\begin{enumerate}

    \item $w[1:\ell+1]=w[q:q+\ell]=a^{\ell}b$  where $\ell\geq 1$, 
        \item $w[1:i+1]=w[q:q+i]$, where $\ell < i < \min(n-q,q-1)$, and
    \item $w[q:n]$ is the lexicographically smallest suffix of $w[2:n]$.
\end{enumerate}
Now we are ready to present our mapping.  We define $f:\mathbf{D}(n)\to \Qs_k(n)$ by 
\[f(w,i) = \begin{cases} 
      a^i w[i+1:n] , & \text{if $w[i+1] > a = \min(w)$;} \\
      a^i (a+1) w[i+2:n] , & \text{otherwise.} \\
   \end{cases}\]
The mapping $f$ is constructed to map quasinecklaces to quasinecklaces.  Since we know that the number of quasinecklaces is proportional to the number of necklaces, it is sufficient to show that the size of the preimage of every quasinecklace is bounded above by a constant. We do this by treating both cases as separate functions with their own restricted domains. If the size of the preimage of every quasinecklace is bounded above by a constant for both cases, then the same is true for the mapping~$f$.

\begin{lemma}\label{lemma:firstComp}
   Let $\mathbf{D}_1 =\{(w,i)\in \mathbf{D}(n) \mid w[i+1] > \min(w)\}$. Then $f(w,i)$, restricted to $(w,i)\in \mathbf{D}_1$, is one-to-one. 
\end{lemma}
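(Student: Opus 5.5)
The plan is to show that $w$ can be reconstructed from $(f(w,i),i)$. Since $i$ is recorded in $f(w,i)$ as the length of its leading run of $a$'s, this means $f(w,i)$ alone determines $(w,i)$.

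Fix $(w,i)\in\mathbf{D}_1$ and put $x=f(w,i)=a^i w[i+1:n]$ with $a=\min(w)$. Because $w[i+1]>a$, the prefix of $x$ is exactly $a^i$ followed by a symbol larger than $a$. We also have $\min(x)=a$, since $w$ contains $a$ and the rewritten prefix consists only of $a$'s. Hence, given $x$, we can read off $a=\min(x)$ and $i$ as the length of the maximal prefix run of $a$'s in $x$. The suffix $x[i+1:n]=w[i+1:n]$ is then known. What remains is to recover the prefix $w[1:i]$.

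For this we use conditions 1 and 2 of $\mathbf{D}(n)$. They give $w[1:i+1]=w[q:q+i]$ with $i<\min(n-q,q-1)$, so $q>i+1$. The window $w[q:q+i]$ therefore lies entirely inside $w[i+2:n]=x[i+2:n]$. So $w[1:i]=x[q:q+i-1]$, and $w$ is determined by $x$ once $q$ is determined.

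The main obstacle is recovering $q$ from $x$. My first attempt uses condition 3: $w[q:n]$ is the lexicographically smallest suffix of $w[2:n]$. Since $q\ge i+2$, the suffix $w[q:n]$ is a suffix of $x[i+2:n]$. I expect $q$ to be characterized as the start of the lexicographically smallest suffix of $x[i+2:n]$, and I would argue this directly.
\begin{itemize}
\item Every suffix of $x[i+2:n]$ is a suffix of $w[2:n]$, so none is smaller than $w[q:n]$.
\item Conversely, the smallest suffix of $x[i+2:n]$ is at least as small as $w[q:n]$. By minimality in $w[2:n]$, ties must coincide, because distinct suffixes of the same word have different lengths and are therefore distinct words.
\end{itemize}
Thus $q$ is the start position of the minimal suffix of $x[i+2:n]$, which is a function of $x$. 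We only need to double check that the index shift is consistent: $x$ and $w$ share positions $i+1$ through $n$, so both use the same indexing. We also need the hypotheses $w[i+1]>a$ and $i>\ell$, which were used to identify $i$.

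Putting these steps together, $(w,i)$ is recovered from $f(w,i)$, so $f$ restricted to $\mathbf{D}_1$ is one-to-one.
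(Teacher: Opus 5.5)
Your proof is correct and follows the same route as the paper's. You read off $a=\min(x)$ and $i$ from the leading run of $f(w,i)$, note that $w[i+1:n]$ is unchanged, locate the minimal suffix $w[q:n]$ of $w[2:n]$, and copy $w[1:i]$ from $w[q:q+i-1]$. Your remark that $q\ge i+2$ makes $w[q:n]$ the minimal suffix of the unchanged tail $x[i+2:n]$ supplies the justification the paper leaves implicit when it asserts that the two preimages have the same smallest suffix of $w[2:n]$.
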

\begin{proof}
    Suppose $f(u,i)=f(v,j)$ where $(u,i),(v,j)\in \mathbf{D}_1$. Since $f(u,i)=f(v,j)$, we have $\min(u)=\min(v)=a$. But this means that $i=j$ since $f(u,i)$ begins with $a^jv[j+1]$ where $v[j+1]>a$. Now we have $a^iu[i+1:n]=a^iv[i+1:n]$, which implies that the smallest suffix of $u[2:n]$ is equal to the smallest suffix of $v[2:n]$. Then $u[1:i]=v[1:i]$, and thus $u=v$. Therefore, $f$ is one-to-one.
\end{proof}

\noindent
Note the second case in the definition of $f$ implies that $w[i+1] = \min(w)$.
\begin{lemma}\label{lemma:secondComp}
   Let $\mathbf{D}_2=\{(w,i)\in \mathbf{D}(n) \mid w[i+1] = \min(w)\}$. Then $f(w,i)$, restricted to $(w,i)\in \mathbf{D}_2$, is one-to-one.
\end{lemma}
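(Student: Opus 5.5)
We will follow the structure of the proof of Lemma~\ref{lemma:firstComp}: assume $f(u,i)=f(v,j)$ with $(u,i),(v,j)\in\mathbf{D}_2$, and show first that $i=j$ and then that $u=v$.

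\textbf{Step 1: the minimum symbol and the index.} Since $u[i+1]=\min(u)$ and $u$ begins with $a^\ell b$ where $a<b$, we have $i\ge \ell+1$, so $u$ contains a letter larger than $a$. Hence $a+1\le k$ and $f(u,i)=a^i(a+1)u[i+2:n]$ is well defined. The first $a$ in the image is forced by $\min(u)$. The image has minimum $a$ and begins with a run of exactly $i$ copies of $a$ followed by $a+1>a$. The same holds for $v$ with $\min(v)$ and $j$. Comparing the two images, $\min(u)=\min(v)=a$ and then $i=j$, since both are the length of the initial run of $a$'s.

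\textbf{Step 2: the tails agree.} With $i=j$, equality of the images gives $u[i+2:n]=v[i+2:n]$. We also have $u[i+1]=a=v[i+1]$. Therefore $u[i+1:n]=v[i+1:n]$.

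\textbf{Step 3: recovering the prefix.} It remains to show $u[1:i]=v[1:i]$. We use the smallest-suffix condition~3 in the definition of $\mathbf{D}(n)$: $u[q:n]$ is the smallest suffix of $u[2:n]$ and $u[1:i+1]=u[q:q+i]$, where $q>i+1$ because $i<q-1$. So the whole factor $u[q:n]$ lies inside the common tail $u[i+2:n]=v[i+2:n]$. I expect the main obstacle to be showing that the position $q$ is the same for $u$ and $v$. The smallest suffix of $u[2:n]$ is determined by $u[2:n]$, which includes the unknown part $u[2:i]$, so we cannot simply read $q$ off the shared tail.

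To handle this, I will argue that the smallest suffix of $u[2:n]$ already lies in $u[i+2:n]$ and equals the smallest suffix of $u[i+2:n]$. Being smallest among all suffixes of $u[2:n]$, it is in particular smallest among the suffixes starting inside $u[i+2:n]$. The same argument applies to $v$, and $u[i+2:n]=v[i+2:n]$, so the two words give the same $q_u=q_v=q$ by uniqueness of the lexicographically smallest suffix. Then $u[1:i]=u[q:q+i-1]=v[q:q+i-1]=v[1:i]$, and hence $u=v$.

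A quick check is needed that $q+i-1\le n$. This holds because $i<n-q$. So the prefix is read entirely from the shared tail, which gives injectivity of $f$ on $\mathbf{D}_2$.
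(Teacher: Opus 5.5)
Your proof is correct and takes the same route as the paper's. You recover $\min(u)=\min(v)=a$ and $i=j$ from the initial run $a^i(a+1)$, match the tails, identify the smallest suffix of $u[2:n]$ with that of $v[2:n]$, and read $u[1:i]$ off the copy starting at position $q$. Your version is also more complete, since it justifies the step the paper only asserts: because $q>i+1$, the smallest suffix of $u[2:n]$ lies inside the shared tail $u[i+2:n]=v[i+2:n]$, is the smallest suffix of that tail, and therefore gives the same $q$ for both $u$ and $v$.
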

\begin{proof}
    Suppose $f(u,i)=f(v,j)$ where $(u,i),(v,j)\in \mathbf{D}_2$. Since $f(u,i)=f(v,j)$, we have $\min(u)=\min(v)=a$. But this means that $i=j$ since $f(u,i)$ begins with $a^j(a+1)$. Now we have $a^i(a+1)u[i+2:n]=a^i(a+1)v[i+2:n]$, which implies that the smallest suffix of $u[2:n]$ is equal to the smallest suffix of $v[2:n]$. Then $u[1:i+1]=v[1:i+1]$, and thus $u=v$. Therefore, $f$ is one-to-one.
\end{proof}

Lemmas~\ref{lemma:firstComp} and \ref{lemma:secondComp} show that, when restricted to their respective domains, the first and second cases of $f$ are each one-to-one. Consequently, each of these cases could, in principle, map different elements of their domains to the same quasinecklace without violating injectivity within the case itself. 
Taking all cases together, we have that $f$ can map at most two distinct elements of $\mathbf{D}(t)$ to a single quasinecklace in $\Qs_k(t)$. Since the elements of $\mathbf{D}(t)$ correspond to all but a constant number of comparison operations performed by \Call{CMP}{}, each quasinecklace is therefore charged for at most two comparisons.  Therefore, since 
\begin{itemize}
    \item $\Qn_k(t) \in \mathcal{O}(\Nn_k(t))$ by Theorem~\ref{theorem:QNeck},
    \item $\sum\limits_{t=1}^n\Nn_k(t) \in \mathcal{O}(\Nn_k(n))$ by Remark~\ref{count:sum}, and
    \item $\Ln_k(n) \in \Theta(\Nn_k(n)$, 
\end{itemize}
the total number of comparison operations performed by \Call{CMP}{} across all recursive calls, considering all suffix lengths $1 \leq t \leq n$, is $\mathcal{O}( \Ln_k(n))$. 

\begin{theorem}\label{theorem:analysisN}
The necklaces $\Ns_k(n)$ and Lyndon words $\Ls_k(n)$ can each be listed in colex order in constant amortized time using $\mathcal{O}(n)$ space.
\end{theorem}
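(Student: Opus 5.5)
The plan is to assemble results already established in the paper. The work has three parts: correctness of the modified algorithm, the constant-per-node bound excluding \Call{CMP}{}, and the aggregate bound on \Call{CMP}{}.

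\emph{Correctness.} By Theorem~\ref{theorem:quasi}, \Call{Gen}{} visits exactly the quasinecklaces $\Qs_k(n)$ in colex order. Every necklace satisfies conditions (a)--(c) of Definition~\ref{def:pseudo}, so $\Ns_k(n)\subseteq \Qs_k(n)$ and hence $\Ls_k(n)\subseteq\Qs_k(n)$. Filtering the colex list of quasinecklaces therefore yields necklaces and Lyndon words in colex order.

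The filter is exact by Lemma~\ref{lemma:suf}, provided that at each \Call{Visit}{} the parameters satisfy $r=\suf(w)$ and $p=\PSuf(w,r)$. I would establish this invariant by induction down the recursion tree:
\begin{itemize}
\item at internal nodes, Lemmas~\ref{lemma:sufInc} and \ref{lemma:psufInc} maintain $r$ and $p$ for the current suffix;
\item at a leaf with $j>0$, Corollaries~\ref{corollary:sufInc} and \ref{corollary:psufInc} complete the padding by $1^j$;
\item the exceptional suffix $u=1$ is handled directly by setting $p=n$, since $w=1^n$ is a necklace.
\end{itemize}
One point needs care: the preconditions of \Call{CMP}{}. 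The needed facts are that $w[q:n]$ is the smallest suffix (with $q=n-r+1$), and that the run lengths $\ell_1$ and $\ell$ agree with the run-length information maintained by \Call{Gen}{}.

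\emph{Cost.} Each node of the recursion tree does constant work outside \Call{CMP}{}. Each internal node has at least two children, so the node count is $\mathcal{O}(\Qn_k(n))$, which is $\mathcal{O}(\Nn_k(n))$ by Theorem~\ref{theorem:QNeck}.

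For \Call{CMP}{}, the constant overhead of each call (lines 2--4 and the first and last loop iterations) is charged to its node. Every remaining loop iteration corresponds to a pair $(w,i)\in\mathbf{D}(t)$, where $t$ is the length of the current suffix, shifted so that $p=1$. By Lemmas~\ref{lemma:firstComp} and \ref{lemma:secondComp}, the map $f$ sends at most two such pairs to each element of $\Qs_k(t)$. Two things need checking here:
\begin{itemize}
\item the images of $f$ really are quasinecklaces;
\item a given suffix of length $t$ arises only from a bounded number of nodes.
\end{itemize}
The second holds because distinct nodes of the recursion tree correspond to distinct suffixes.

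With those checks, the total cost of \Call{CMP}{} is at most $2\sum_{t=1}^n \Qn_k(t)$. By Theorem~\ref{theorem:QNeck} and Remark~\ref{count:sum} this is $\mathcal{O}(\Nn_k(n))$, and by Remark~\ref{count:lyn} it is also $\mathcal{O}(\Ln_k(n))$. So the total work is $\mathcal{O}(\Nn_k(n))$ for necklaces and $\mathcal{O}(\Ln_k(n))$ for Lyndon words, giving constant amortized time per listed object.

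\emph{Space.} The recursion depth is at most $n$ and each frame stores $\mathcal{O}(1)$ parameters, alongside the single array $w$. This gives $\mathcal{O}(n)$ space.

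The main obstacle is the charging step. One must verify that each counted comparison iteration genuinely gives a pair in $\mathbf{D}(t)$; in particular, the prefix of the current suffix must be a quasinecklace with its smallest proper suffix at position $q$. One must also verify that summing over all nodes does not let the same quasinecklace of length $t$ be charged from many distinct nodes. I would handle this by indexing each charge by the pair consisting of the node's suffix and $i$, and noting that the node's suffix together with $i$ determines the charge.
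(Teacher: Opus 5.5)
Your proposal is correct and follows the paper's own argument essentially step for step: the recursion tree is bounded via Theorem~\ref{theorem:QNeck}, the constant overhead of each \Call{CMP}{} call is charged to its node, the remaining loop iterations are charged to $\Qs_k(t)$ through the at-most-two-to-one map $f$ of Lemmas~\ref{lemma:firstComp} and~\ref{lemma:secondComp}, and the sum over $t$ is controlled by Remarks~\ref{count:sum} and~\ref{count:lyn}. The paper treats the checks you flag just as briefly (it derives membership in $\mathbf{D}$ from the \Call{CMP}{} preconditions in one line), and the first check is immediate: since $i>\ell$, the prefix $a^i$ of $f(w,i)$ is the unique longest run of $a=\min(w)$ and is followed by a symbol larger than $a$, while the last symbol of $f(w,i)$ is that of $w$, which is not $a$ by condition~(c) because $w\neq a^t$.
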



\subsection{Weight constraint} \label{subsection:weight}

The \emph{weight} of a word is the sum of its symbols.  To generate the necklaces $\Ns_k(n)$ or Lyndon words $\Ls_k(n)$ with weight at most $w$ in colex order, we need only make the following additional modifications to those outlined in Section~\ref{sec:modify} that modified Algorithm~\ref{algo:QN}:

\begin{enumerate}
    \item Add a parameter that maintains the  weight of the current suffix $w[j+1:n]$ that is updated appropriately at each recursive call.
    \item The function $\Next$ is modified so that it returns the maximum value that will not violate the weight constraint, accounting for the minimum prefix $1^{j-1}$.
\end{enumerate}

\begin{corollary}
The necklaces $\Ns_k(n)$ and the Lyndon words $\Ls_k(n)$ with weight at most $w$ can each be listed in colex order in constant amortized time using $\mathcal{O}(n)$ space.
\end{corollary}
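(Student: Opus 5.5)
The plan is to treat the weighted version as a pruned copy of the unweighted recursion. The argument then has the same two pieces as before. First, the weight-constrained quasinecklaces are generated in constant amortized time. Second, the number of such quasinecklaces, together with the \Call{CMP}{} work, is proportional to the number of weight-constrained necklaces, which in turn is proportional to the number of weight-constrained Lyndon words.

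\textbf{Step 1: the pruned tree.} Let $\Qs_k(n,w)$ and $\Ns_k(n,w)$ denote the quasinecklaces and necklaces of weight at most $w$. By Remark~\ref{remark:pad}, a suffix $cu$ extends to a member of $\Qs_k(n,w)$ exactly when $1^{j-1}cu$ does, since padding with $1$'s gives the minimum weight. Hence the modified $\Next$ returns
\[\min\bigl(\Next(j,u,\runs(u)),\, w-\mathrm{wt}(u)-(j-1)\bigr),\]
which is computable in constant time. Every node of the pruned recursion tree therefore still has a leaf below it. If the modified value is at least $2$, the node has at least two children; if it equals $1$, we visit immediately. The counting argument of Theorem~\ref{theorem:quasi} then goes through unchanged, so the tree has $\mathcal{O}(|\Qs_k(n,w)|)$ nodes and uses $\mathcal{O}(n)$ space.

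\textbf{Step 2: injections preserve the weight bound.} Both injections are weight non-increasing. The map of Theorem~\ref{theorem:QNeck} replaces $b$ by $b-1$, so it sends $\Qs_k(n,w)-\Ns_k(n,w)$ injectively into $\Ns_k(n,w)$. For \Call{CMP}{}, the map $f$ replaces a prefix of $w$ by $a^i$ or $a^i(a+1)$ with $a=\min(w)$. This does not increase weight, except possibly in the second case. There the original prefix $w[1:i+1]$ has $w[1:\ell+1]=a^\ell b$ with $b\ge a+1$ and $\ell<i$, so it contains a symbol at least $a+1$. Consequently $a^i(a+1)$ has weight no larger than $w[1:i+1]$.

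The suffix-length argument requires care, since $\mathbf{D}(t)$ refers to suffixes of length $t$. A suffix $s$ of a word of weight at most $w$ has weight at most $w-(n-t)$. Its image under $f$ is therefore a quasinecklace of length $t$ whose weight budget $w-(n-t)$ is exactly what padding with $1^{n-t}$ restores. Thus each comparison is charged to an element of $\Qs_k(n,w)$, obtained as $1^{n-t}f(s,i)$, and each element receives $\mathcal{O}(1)$ charges for each $t$.

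\textbf{The main obstacle} is summing these charges over all $t$. In place of Remark~\ref{count:sum}, I would count the charges as the number of pairs $(t,x)$ where $x\in\Qs_k(t,w-(n-t))$. I would then bound $\sum_t |\Ns_k(t,w-(n-t))|$ by $\mathcal{O}(|\Ns_k(n,w)|)$ by mapping a necklace $x$ of length $t$ to a necklace of length $n$ via prepending $1^{n-t}$. This map is injective, preserves the weight bound, and gives a necklace because the prepended $1^{n-t}$ is a run of the minimal symbol. The difficulty is that pairs with different $t$ can collide under this map. So I would first try to bound the number of $t$ producing the same image, or instead charge to $\Nn_k(n)$-style geometric decay; if that fails, I would weaken the claim to the unweighted Remark~\ref{count:sum} and handle small $w$ (near $n$) separately, where $\Qs_k(n,w)$ is tiny.

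Finally, to pass from necklaces to Lyndon words, I need $\Ln$ restricted to weight at most $w$ to be $\Theta$ of $\Nn$ restricted to weight at most $w$. I would take this from the cited analogue of Remark~\ref{count:lyn} for bounded weight.
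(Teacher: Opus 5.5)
Your Steps 1 and 2 are correct and are essentially all the paper's proof contains. The paper notes that the suffix weight and the capped value $\min(\Next(j,u,\runs(u)),\,w-\mathrm{wt}(u)-(j-1))$ cost $\mathcal{O}(1)$ per node, and that the injection of Theorem~\ref{theorem:QNeck} and the map $f$ do not increase weight; your check of the second case of $f$ is right. It then simply says ``the same analysis applies.''

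\textbf{The gap.} The step you flag as the main obstacle is a genuine gap. Your proposal leaves it open, and the paper's one-line proof does not address it either. The unweighted argument charges comparisons at suffix length $t$ to $\Qs_k(t)$ and sums over $t$ using Remark~\ref{count:sum}. The bounded-weight analogue of that remark is false. Take $k=2$ and weight bound $n+1$. The only necklaces of length $t\ge 2$ and weight at most $t+1$ are $1^t$ and $1^{t-1}2$, so $\sum_t |\Ns_2(t)\cap\{\text{weight}\le t+1\}|$ is about $2n$, while only $2$ necklaces of length $n$ have weight at most $n+1$. For any fixed excess $w-n$ the counts grow only polynomially in $t$, and the sum exceeds its last term by a factor of order $n$. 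So none of your fallbacks can succeed:
\begin{itemize}
\item the map $x\mapsto 1^{n-t}x$ has up to $n$ preimages (e.g.\ $1^{n-1}2$);
\item there is no geometric decay to exploit;
\item Remark~\ref{count:sum} only gives a bound in terms of $\Nn_k(n)$, which can be exponentially larger than the weight-bounded count when $w$ is close to $n$.
\end{itemize}

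\textbf{How to close it.} The missing idea is to use how the algorithm prunes, not just the counts.
\begin{itemize}
\item A comparison inside the \textbf{for} loop of \Call{Gen}{} happens only at a parent $u$ whose capped value is at least $2$. This forces $w-\mathrm{wt}(u)-(j-1)\ge 2$. For the child $s=1u$ (the case $c=a=1$, with $t=|s|$) it means $1^{n-t}s$ has weight at most $w-1$.
\item Spend that unit of slack to record $t$. Charge $(s,i)$ to $f(s,i)\,1^{n-t-1}2$ if $n-t-1<i$, and to $1^{n-t-1}2\,f(s,i)$ otherwise.
\item Since $f(s,i)$ begins with its unique longest run $1^i$ and does not end in $1$, both words are length-$n$ quasinecklaces of weight at most $w$.
\item You recover $t$ from the run of $1$'s before the last symbol in the first case, and from the leading run in the second. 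So comparisons for all $t<n$ together are charged $\mathcal{O}(1)$-to-one into the weight-bounded $\Qs_k(n)$.
\item When $a\ge 2$ no slack is needed, because $1^{n-t}f(s,i)$ records $t$ in its unique run of $1$'s.
\item Comparisons at $t=n$, including the one in the \Call{Visit}{} branch, are charged via $f$ directly.
\end{itemize}
The weight-preserving injection of Theorem~\ref{theorem:QNeck} then finishes the necklace case.

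\textbf{The Lyndon step.} This is also unsupported. Remark~\ref{count:lyn} is unweighted and the paper cites no bounded-weight version. The claim actually fails at $w=n$, where $1^n$ is the only necklace, so the ratio of Lyndon words to necklaces under a weight bound needs its own argument.
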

\begin{proof}
The weight of the current suffix can easily be maintained at each recursive call with constant  work.  Similarly, the second modification also requires an extra constant amount of work to the function $\Next$.  In the proof of Theorem~\ref{theorem:QNeck}, observe that the one-to-one mapping of non-necklaces to necklaces preserves the weight constraint.  Furthermore, in the analysis from Section~\ref{sec:analysis}, they key function $f$ maps a suffix and an index to a quasinecklace with the same or smaller weight.  Thus, the same analysis applies to this bounded weight variant.
\end{proof}

\section{Application: de Bruijn sequences}\label{section:app}

A \emph{de Bruijn sequence} is a cyclic sequence of length $k^n$ that contains every length $n$-word over $\Sigma_k$ as a subword exactly once.  The \emph{Grandmama} de Bruijn sequence~\cite{Dragon&etal:2018}, can be constructed by concatenating together the primitive roots of the necklaces in $\Ns_k(n)$ as they appear in colex order.  For example, the colex listing of $\Ns_3(3)$ is:

\[ 111, 112, 122, 222, 132, 113, 123, 223, 133, 233, 333. \]
By concatenating together the primitive roots of each necklace, we obtain the Grandmama de Bruijn sequence:
\[1\cdot112\cdot122\cdot2\cdot132\cdot113\cdot123\cdot223\cdot133\cdot233\cdot3 \]
of length $3^3 = 27$, where $\cdot$ denotes concatenation. By applying Theorem~\ref{theorem:analysisN}, we obtain the following result.
\begin{corollary} \label{cor:Grandmama}
    The Grandmama de Bruijn sequence for $n,k\geq 1$ can be constructed in constant amortized time per symbol using $\mathcal{O}(n)$ space.
\end{corollary}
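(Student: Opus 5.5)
The plan is to run the necklace-generation algorithm of Theorem~\ref{theorem:analysisN}. Each time \Call{Visit}{} processes a necklace $w$, we output the primitive root of $w$. By the cited construction of~\cite{Dragon&etal:2018}, concatenating these roots in colex order gives the Grandmama de Bruijn sequence. So correctness is immediate, and only the time and space bounds need an argument.

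First, when a necklace is visited we need its period in $\mathcal{O}(1)$ time. Lemma~\ref{lemma:suf} supplies this: $w$ is a necklace iff $p=\PSuf(w,r)=n$ with $r=\suf(w)$. The proof of that lemma shows $w=v^{n/r}$ with $v=w[n-r+1:n]$ Lyndon, so the primitive root has length $r$ and is $w[1:r]$. The modified \Call{Gen}{} keeps $r$ up to date in constant time per node, including at the \Call{Visit}{} step via Corollaries~\ref{corollary:sufInc} and~\ref{corollary:psufInc}. Outputting the root then costs $\mathcal{O}(r)$ time, one unit per symbol output, plus $\mathcal{O}(1)$ overhead per visited necklace.

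Second, we bound the total work. Outputting the roots costs $\sum_{w}|\text{root}(w)| = k^n$ in total, which is exactly the sequence length. Everything else is $\mathcal{O}(\Nn_k(n))$ by Theorem~\ref{theorem:analysisN}. It remains to show $\Nn_k(n)\in\mathcal{O}(k^n)$, which holds because necklaces form a subset of all $k^n$ words. So the total work is $\mathcal{O}(k^n)$, i.e., constant amortized time per symbol. Space is $\mathcal{O}(n)$ because we only hold $w$ and the recursion stack; the root is a prefix of $w$ already in memory.

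The main obstacle is the edge cases in the statement's range $n,k\geq1$. For $k=1$ the sequence is just ``1'' and the claim is trivial. For $n=1$ the sequence is $1\,2\cdots k$, which can be printed directly. The other point needing care is the special case $u=1$ in the update before \Call{Visit}{}. For the necklace $1^n$ that update sets $p=n$, but $r$ must equal $1$ so that we output the root $1$ rather than $1^n$. I would check this explicitly against Corollary~\ref{corollary:sufInc}: the strict comparison fails, so $r$ remains $1$.
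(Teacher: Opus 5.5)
Your proposal is correct and follows the same route as the paper, whose proof is a one-line appeal to Theorem~\ref{theorem:analysisN} with \Call{Visit}{} printing the primitive root $w[1:r]$, $r=\suf(w)$ (which is what the DB mode of the C code does). Your accounting of $\sum_w |\text{root}(w)| = k^n$ output symbols plus $\mathcal{O}(\Nn_k(n))\subseteq\mathcal{O}(k^n)$ remaining work makes that argument explicit; the only imprecision is calling the maintenance of $r$ ``constant time per node'', since it goes through \Call{CMP}{} and is only amortized constant, but your total-work bound already defers that cost to Theorem~\ref{theorem:analysisN}, so nothing is missing.
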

\noindent
This is an improvement by a factor of $n$ over the bound established in~\cite{Dragon&etal:2018}.  For $k=2$, the result was previously established in~\cite{Sawada&Williams&Wong:2017}. 

Recently, it has been demonstrated that concatenating the primitive roots of all necklaces in $\mathbf{N}_k(n)$ \emph{with weight at most $w$} as they appear in colex order yields a bounded weight de Bruijn sequence~\cite{Campbell&etal:2026}. Such sequences have found application in the construction of universal cycles for $k$-subsets and $k$-multisets~\cite{Campbell&etal:2026}. 
The result in Corollary~\ref{cor:Grandmama} also holds for this bounded-weight variant of the Grandmama de Bruijn sequence.

\section{Final remarks}\label{section:open}

In this paper we give a CAT algorithm to list the necklaces $\Ns_k(n)$ and Lyndon words $\Ls_k(n)$ (with an upper bound on their weight) as they appear in colex order.  When these words are listed in lexicographic order, efficient ranking and unranking algorithms are known~\cite{Kociumaka&Radoszewski&Rytter:2016}.
Thus, it is an interesting open question to efficiently rank/unrank necklaces and Lyndon words as they appear in colex order.   Such algorithms would likely lead to an efficient algorithm to decode the Grandmama de Bruijn sequence and its bounded-weight variant.

\section{Acknowledgment}
Joe Sawada (grant RGPIN-2025-03961) and Daniel Gabri\'{c} (grant RGPIN-2026-04863)  gratefully acknowledge research support from the Natural Sciences and Engineering Research Council of Canada (NSERC).

\bibliographystyle{new2}

\bibliography{abbrevs,simplest}

\appendix

\section{Table of Lyndon words, necklaces, and quasinecklaces}\label{appendix:table}

\begin{table}[H]
\centering
\caption{Lyndon words, necklaces, and quasinecklaces in colex order for $n=5$, $k=3$. Colour coding: blue (non-Lyndon), red (non-necklace). Read the lists top down, then left to right.}
\label{table:example}
\vspace{2mm}
\setlength{\extrarowheight}{4pt}   
\begin{tabular}{|p{0.30\linewidth}|p{0.30\linewidth}|p{0.30\linewidth}@{}|}
\hline
\multicolumn{1}{|c|}{\textbf{Lyndon words $\Ls_3(5)$}} & 
\multicolumn{1}{c|}{\textbf{Necklaces} $\Ns_3(5)$} & 
\multicolumn{1}{c|}{\textbf{Quasinecklaces} $\Qs_3(5)$} \\
\hline
\begin{minipage}[t]{\linewidth}
\vspace{-0.2cm}
\centering
\begin{multicols}{2}
\centering
11112 \\ 11212 \\ 11312 \\ 11122 \\ 12122 \\ 11222 \\ 12222 \\ 13222 \\ 11322 \\ 12322 \\ 13322 \\ 11132 \\ 12132 \\ 13132 \\ 11232 \\ 12232 \\ 13232 \\ 11332 \\ 12332 \\ 13332 \\ 11113 \\ 11213 \\ 12213 \\ 11313 \\ 12313 \\ 11123 \\ 12123 \\ 11223 \\ 12223 \\ 22223 \\ 13223 \\ 11323 \\ 12323 \\ 22323 \\ 13323 \\ 11133 \\ 12133 \\ 13133 \\ 11233 \\ 12233 \\ 22233 \\ 13233 \\ 23233 \\ 11333 \\ 12333 \\ 22333 \\ 13333 \\ 23333
\end{multicols}
\end{minipage}
&
\begin{minipage}[t]{\linewidth}
\vspace{-0.2cm}
\centering
\begin{multicols}{2}
\centering
\blue{11111} \\ 11112 \\ 11212 \\ 11312 \\ 11122 \\ 12122 \\ 11222 \\ 12222 \\ \blue{22222} \\ 13222 \\ 11322 \\ 12322 \\ 13322 \\ 11132 \\ 12132 \\ 13132 \\ 11232 \\ 12232 \\ 13232 \\ 11332 \\ 12332 \\ 13332 \\ 11113 \\ 11213 \\ 12213 \\ 11313 \\ 12313 \\ 11123 \\ 12123 \\ 11223 \\ 12223 \\ 22223 \\ 13223 \\ 11323 \\ 12323 \\ 22323 \\ 13323 \\ 11133 \\ 12133 \\ 13133 \\ 11233 \\ 12233 \\ 22233 \\ 13233 \\ 23233 \\ 11333 \\ 12333 \\ 22333 \\ 13333 \\ 23333 \\ \blue{33333}
\end{multicols}
\end{minipage}
&
\begin{minipage}[t]{\linewidth}
\vspace{-0.2cm}
\centering
\begin{multicols}{2}
\centering
\blue{11111} \\ 11112 \\ 11212 \\ \red{12212} \\ 11312 \\ \red{12312} \\ 11122 \\ 12122 \\ 11222 \\ 12222 \\ \blue{22222} \\ 13222 \\ 11322 \\ 12322 \\ 13322 \\ 11132 \\ 12132 \\ 13132 \\ 11232 \\ 12232 \\ 13232 \\ 11332 \\ 12332 \\ 13332 \\ 11113 \\ 11213 \\ 12213 \\ \red{13213} \\ 11313 \\ 12313 \\ \red{13313} \\ 11123 \\ 12123 \\ 11223 \\ 12223 \\ 22223 \\ 13223 \\ 11323 \\ 12323 \\ 22323 \\ 13323 \\ \red{23323} \\ 11133 \\ 12133 \\ 13133 \\ 11233 \\ 12233 \\ 22233 \\ 13233 \\ 23233 \\ 11333 \\ 12333 \\ 22333 \\ 13333 \\ 23333 \\ \blue{33333}
\end{multicols}
\end{minipage}
\vspace{0.2cm}
\\
\hline
\end{tabular}
\end{table}

\section{Table of pseudonecklaces and quasinecklaces}\label{appendix:pseudo}

  \begin{table}[H]
\centering
\caption{Pseudonecklaces and quasinecklaces in colex order for $n=8$, $k=2$. Quasinecklaces in red are not pseudonecklaces. Read the lists top down, then left to right.}
\label{table:pseudo-quasi}
\vspace{2mm}
\setlength{\extrarowheight}{4pt}
\begin{tabular}{|p{0.48\linewidth}|p{0.48\linewidth}|}
\hline
\multicolumn{1}{|c|}{\textbf{Pseudonecklaces} $\Ps(8)$} & 
\multicolumn{1}{c|}{\textbf{Quasinecklaces} $\Qs_2(8)$} \\
\hline
\begin{minipage}[t]{\linewidth}
\vspace{-0.2cm}
\centering
\begin{multicols}{2}
\centering
11111111 \\ 11111112 \\ 11121112 \\ 11112112 \\ 11212112 \\ 11122112  \\ 11111212 \\ 11211212 \\ 11121212 \\ 12121212 \\ 11221212  \\ 11112212 \\ 11212212  \\ 11122212 \\ 12122212 \\ 11222212 \\ 11111122 \\ 11121122 \\ 11221122 \\ 11112122 \\ 11212122 \\ 11122122 \\ 12122122 \\ 11222122 \\ 11111222 \\ 11211222 \\ 11121222 \\ 12121222 \\ 11221222 \\ 12221222 \\ 11112222 \\ 11212222 \\ 12212222 \\ 11122222 \\ 12122222 \\ 11222222 \\ 12222222 \\ 22222222
\end{multicols}
\end{minipage}
&
\begin{minipage}[t]{\linewidth}
\vspace{-0.2cm}
\centering
\begin{multicols}{2}
\centering
11111111 \\ 11111112 \\ 11121112 \\ 11112112 \\ 11212112 \\ 11122112 \\ \red{11222112} \\ 11111212 \\ 11211212 \\ 11121212 \\ 12121212 \\ 11221212 \\ \red{12221212} \\ 11112212 \\ 11212212 \\ \red{12212212} \\ 11122212 \\ 12122212 \\ 11222212 \\ \red{12222212} \\ 11111122 \\ 11121122 \\ 11221122 \\ 11112122 \\ 11212122 \\ \red{12212122} \\ 11122122 \\ 12122122 \\ 11222122 \\ \red{12222122} \\ 11111222 \\ 11211222 \\ 11121222 \\ 12121222 \\ 11221222 \\ 12221222 \\ 11112222 \\ 11212222 \\ 12212222 \\ 11122222 \\ 12122222 \\ 11222222 \\ 12222222 \\ 22222222
\end{multicols}
\end{minipage} 
\vspace{0.2cm}
\\
\hline
\end{tabular}
\end{table}

\newpage
\section{Generation Algorithm}\label{appendix:neck}
{\ssmall
\begin{code}
//---------------------------------------------------------------------------------
// NECKLACES, LYNDON WORDS, OR QUASINECKLACES IN COLEX ORDER IN O(1)-AMORTIZED TIME
//---------------------------------------------------------------------------------
#include <stdio.h>
#define MAX 100

int type,N,k,maxW,w[MAX],NECK=0,LYNDON=0,DB=0;
long long int total=0;

//---------------------------------------
// RETURNS -1 IF w[p:n-q+p] < w[q:n],
//          0 IF w[p:n-q+p] = w[q:n], and
//          1 IF w[p:n-q+p] > w[q:n].
//---------------------------------------
int CMP(int p, int l1, int q, int l) {
    int i;
    if (q == N) return 0;
    if (l1 < l) return 1;
    if (l1 > l) return -1;
    for (i=l; i<=N-q; i++) {
        if (p+i == q) return -1;
        if (w[p+i] > w[q+i]) return 1;
        else if (w[p+i] < w[q+i]) return -1;
    }
    return 0;
}
//--------------------------------------------------------
// COMPUTE suf(w[j:n])
//--------------------------------------------------------
int Suf(int j, int r, int cmp) {
    if (cmp == -1) return N-j+1;
    return r;
}
//--------------------------------------------------
// COMPUTE PSuf(w[j:n], r)
//--------------------------------------------------
int PSuf(int j, int r, int p, int cmp) {
    if (cmp == -1) return N-j+1;
    if (cmp == 0 && N-j+1-p == r) return N-j+1;
    return p;
}
//-----------------------------------------------------------------------------------
// RETURNS LARGEST c SUCH THAT cw[j+1:n] IS THE SUFFIX OF SOME LENGTH-N QUASINECKLACE
//-----------------------------------------------------------------------------------
int MaxSymbol(int j, int a, int l, int b, int l1) {
    int b1 = (l1+1 <= N-j) ? w[j+l1+1] : a;
    if (j == 0 || w[N] == 1 && j < N) return 1;
    if (j == N || j > l+1 || (j > 1 && a > 1)) return k;
    if (j == l + 1 || l == N-1) return b;
    if (j == 1 && b1 <= b && l1+1 == l && w[N] != a) return a;
    if (j == 1 && l1+1 > l && w[N] != a) return a;
    return (a-1 >= 1) ? a-1 : 1;
}
//---------------------------------------------------
// PRINT EACH QUASINECK/NECK/LYN
//---------------------------------------------------
void Visit(int r, int p) {
    int i;
    //TEST IF A WORD IS A NECK/LYN
    if (NECK && p < N) return;
    if (LYNDON && r < N) return;
    //OUTPUT DB SEQUENCE OR QUASINECK/NECK/LYN
    if (DB) for (i=1; i<=r; i++) printf("%d", w[i]);
    else {
        total++;
        for (i=1; i<=N; i++) printf("%d", w[i]);
        printf("\n");
    }
}
\end{code}\newpage
\begin{code}
//------------------------------------------------------------------------------
// LIST NECKLACES (LYNDON WORDS/PSEUDO-NECKLACES) IN COLEX ORDER.
//------------------------------------------------------------------------------
void Gen(int j, int a, int l, int b, int l1, int r, int p, int wt) {
    int c,MS,cmp,nr,np,b1;
    MS = MaxSymbol(j,a,l,b,l1);
    b1 = (l1+1 <= N-j) ? w[j+l1+1] : a;
    if (wt-(MS+(j-1)) < 0) MS = wt-(j-1);
    if (MS == 1) {
        if (j == 0) Visit(r,p);
        else if (w[N] == 1) Visit(1,N);
        else if (a > 1) Visit(N,N);
        else {
            //UPDATE r AND p
            cmp = CMP(1,l1+j,N-r+1,l); 
            nr = Suf(1,r,cmp);
            np = PSuf(1,r,p,cmp);
            Visit(nr,np);
        }
    }else {
        for(c=1; c<=MS; ++c){
            w[j] = c;
            if (j == N) Gen(N-1,c,1,c,1,1,1,wt-c);
            else if (c < a) Gen(j-1,c,1,w[j+1],1,N-j+1,N-j+1,wt-c);
            else if (c == a && (l1+1 > l || l1+1 == l && b1  <= b)) {
                cmp = CMP(j,l1+1,N-r+1,l);
                nr = Suf(j,r,cmp);
                np = PSuf(j,r,p,cmp);
                Gen(j-1,a,l1+1,b1,l1+1,nr,np,wt-c);
            } else if(c == a) Gen(j-1,a,l,b,l1+1,r,p,wt-c);
            else Gen(j-1,a,l,b,0,r,p,wt-c);
        }
        w[j] = 1;
    }
}
//-------------------------------------------------------------
void Input() {
    int i;
    printf("      Colex order                          Bounded weight    \n");
    printf("  --------------------------------      -----------------    \n");
    printf("  1.  Necklaces                         5. Necklaces         \n");
    printf("  2.  Lyndon words                      6. Lyndon words      \n");
    printf("  3.  Quasinecklaces                    7. Quasinecklaces    \n");
    printf("  4.  Grandmama De Bruijn sequence\n");
    printf("\n  ENTER object #: ");    scanf("%d", &type);

    if (type == 1 || type == 4 || type == 5) NECK = 1;
    if (type == 2 || type == 6) LYNDON = 1;
    if (type == 4) DB = 1;

    printf("\n  ENTER length n: ");    scanf("%d", &N);
    printf("  ENTER k: ");             scanf("%d", &k);

    if(type >= 5) { 
        printf("  ENTER Weight (%d - %d): ",N,k*N); scanf("%d", &maxW);
    } else maxW = k*N;
    
    printf("\n");
    
    for(i=1; i<=N; ++i) w[i] = 1;
}
//--------------------------------------------------
int main() {  
    Input();
    Gen(N,0,0,0,0,0,0,maxW);
    if(!DB) printf("Total = %lld\n",total);
    else printf("\n\n");
    return 0;
}
\end{code}}
\end{document}